\definecolor{mahogany}{cmyk}{0, 0.77, 0.87, 0}
\definecolor{salmon}{cmyk}{0, 0.53, 0.38, 0}
\definecolor{melon}{cmyk}{0, 0.46, 0.50, 0}
\definecolor{yellowgreen}{cmyk}{0.44, 0, 0.74, 0}
\definecolor{brickred}{cmyk}{0, 0.89, 0.94, 0.28}
\definecolor{OliveGreen}{cmyk}{0.64, 0, 0.95, 0.40}
\definecolor{RawSienna}{cmyk}{0, 0.72, 1.0, 0.45}
\definecolor{ZurichRed}{rgb}{1, 0, 0} 
\newtheorem{conjecture}{Conjecture}
\definecolor{RB}{rgb}{0.7,0.1,0.2}
\definecolor{MP}{rgb}{0.1,0.5,0.1}
\begin{document}

\newtheorem{lemma}[thm]{Lemma}
\newtheorem{corr}[thm]{Corollary}
\newtheorem{proposition}{Proposition}
\newtheorem{theorem}{Theorem}[section]
\newtheorem{deff}[thm]{Definition}
\newtheorem{case}{Case}
\newtheorem{prop}[thm]{Proposition}
\newtheorem{example}{Example}

\newtheorem{corollary}{Corollary}

\theoremstyle{definition}
\newtheorem{remark}{Remark}

\numberwithin{equation}{section}
\numberwithin{definition}{section}
\numberwithin{corollary}{section}

\numberwithin{theorem}{section}

\numberwithin{remark}{section}
\numberwithin{example}{section}
\numberwithin{proposition}{section}

\newcommand{\calD}{\bD}

\newcommand{\conjugate}[1]{\overline{#1}}
\newcommand{\abs}[1]{\left| #1 \right|}
\newcommand{\cl}[1]{\overline{#1}}
\newcommand{\expr}[1]{\left( #1 \right)}
\newcommand{\set}[1]{\left\{ #1 \right\}}

\newcommand{\calC}{\mathcal{C}}
\newcommand{\calK}{\mathcal{K}}
\newcommand{\calS}{\mathcal{S}}
\newcommand{\calE}{\mathcal{E}}
\newcommand{\calF}{\mathcal{F}}
\newcommand{\Rd}{\mathbb{R}^d}
\newcommand{\BR}{\bD(\Rd)}
\newcommand{\R}{\mathbb{R}}
\newcommand{\al}{\alpha}
\newcommand{\RR}[1]{\mathbb{#1}}
\newcommand{\bR}{\mathrm{I\! R\!}}
\newcommand{\ga}{\gamma}
\newcommand{\om}{\omega}
\newcommand{\A}{\mathbb{A}}
\newcommand{\bH}{\mathbb{H}}

\newcommand{\bb}[1]{\mathbb{#1}}
\newcommand{\bI}{\bb{I}}
\newcommand{\bN}{\bb{N}}

\newcommand{\uS}{\mathbb{S}}
\newcommand{\M}{{\mathcal{M}}}
\newcommand{\calB}{{\mathcal{B}}}

\newcommand{\W}{{\mathcal{W}}}

\newcommand{\m}{{\mathcal{m}}}

\newcommand {\mac}[1] { \mathbb{#1} }

\newcommand{\bD}{\mathbb{D}}

\newcommand{\bC}{\Bbb C}

\newtheorem{rem}[theorem]{Remark}
\newtheorem{dfn}[theorem]{Definition}
\theoremstyle{definition}
\newtheorem{ex}[theorem]{Example}
\numberwithin{equation}{section}

\newcommand{\Pro}{\mathbb{P}}
\newcommand\F{\mathcal{F}}
\newcommand\E{\mathbb{E}}
\newcommand\e{\varepsilon}
\def\H{\mathcal{H}}
\def\t{\tau}

\newcommand{\blankbox}[2]{%
  \parbox{\columnwidth}{\centering
    \setlength{\fboxsep}{0pt}%
    \fbox{\raisebox{0pt}[#2]{\hspace{#1}}}%
  }%
}

\title[method of rotations for L\'evy Multipliers]{A method of rotations for L\'evy Multipliers }

\author{Michael Perlmutter*}\thanks{* Supported in part  by NSF Grant
\#1403417-DMS  under PI Rodrigo Ba\~nuelos}
\address{Department of Mathematics, Purdue University, West Lafayette, IN 47907, USA}
\email{mperlmut@math.purdue.edu}
\maketitle

\begin{abstract} We use a method of rotations to study the $L^p$ boundedness, $1<p<\infty$, of Fourier multipliers which arise as the projection of martingale transforms with respect to symmetric $\alpha$-stable processes, $0<\alpha<2$. Our proof does not use the fact that $0<\alpha<2$, and therefore allows us to obtain a larger class of multipliers which are bounded on $L^p$. As in the case of the multipliers which arise as the projection of martingale transforms, these new multipliers also have potential applications to the study of the $L^p$ boundedness of the Beurling-Ahlfors transform; see conjecture \ref{Conj} below.   
\end{abstract}

 \section{Introduction and statement of Results}
The Beurling-Ahlfors transform, defined on the complex plane by 
\begin{equation*}
Bf(z) = -\frac{1}{\pi}p.v.\int_{\mathbb{C}} \frac{f(w)}{(z-w)^2} dw
\end{equation*}
for $f \in C^\infty_0(\mathbb{C})$,
 is the analogue of the Hilbert transform on the real line.  It is a Calder\'on-Zygmund singular integral operator, and it is a  Fourier multiplier with 
\begin{equation*}
\widehat{Bf}(\xi) = \frac{\bar{\xi}}{\xi} \widehat{f}(\xi).
\end{equation*}
The classical theory of Calder\'on-Zygmund singular integrals shows that $B$ is bounded on $L^p(\mathbb{C})$ for $1<p<\infty$.  Because of its many connections to quasiconformal mappings and other problems in complex analysis (see for example \cite{AstIwaMar})  there has been a lot of interest for many years in finding its operator norm on  $L^p(\mathbb{C}),$ $1<p<\infty$, which we denote $\|B\|_p$.   In \cite{Leh}, Lehto showed that $\|B\|_p \geq (p^*-1)$, where $p^* = \max\{p,\frac{p}{p-1}\}$. A long standing conjecture of Iwaniec \cite{Iwa} is that $\|B\|_p = (p^*-1)$.  The literature on this subject is now quite large, and it would be impossible for us to  review it here in its entirety.  For some of this literature, we refer the reader to the overview article \cite{Ban1} and the many references given there. 

Despite the efforts of many researchers, Iwaniec's conjecture remains open. There are, however, many partial results, and the techniques developed in these efforts have lead to many other interesting questions and applications.   In particular, there are a number of probabilistic constructions which provide upper bounds for $\|B\|_p$. The primary purpose of this paper is to study the $L^p$ boundedness of operators closely related to one of these constructions.

  In \cite{BanWan}, Ba\~nuelos and Wang used the background radiation process of Gundy and Varopolous \cite{GunVar} combined with Burkholder's inequalities regarding the sharp $L^p$ bounds of martingale transforms \cite{Bur3} to show that $\|B\|_p\leq 4(p^*-1)$. This result, in addition to being, at the time, the best known upper bound for $\|B\|_p$, had the desirable property that it directly involved the constant  $p^*-1$. This property is shared by many estimates which are obtained by probabilistic methods.  
 In \cite{NazVol}, Nazarov and Volberg showed that $\|B\|_p \leq 2(p^*-1)$ using Bellman function techniques to prove a Littlewood-Paley inequality for heat extensions. The bound $\|B\|_p \leq 2(p^*-1)$ was again obtained in \cite{BanMen} using a method that is similar to \cite{BanWan} but which  replaces the background radition process with space-time Brownian motion. The methods of \cite{BanMen} were refined in \cite{BanJan} taking advantage of the fact that the martingales arising in the representation of the Beurling-Ahlfors transform have certain orthogonality properties to produce the bound $\|B\|_p \leq 1.575(p^*-1)$ which is, as of now, the best known bound valid for all $1<p<\infty$. In \cite{BorJanVol}, this bound was improved   to $\|B\|_p \leq 1.4(p^*-1)$  for $1000<p<\infty.$ 
 
 The method used in \cite{BanMen} and later in \cite{BanJan} and \cite{BorJanVol} is to embed $L^p(\mathbb{R}^n)$ into a space of $p-$integrable martingales via composition of a space-time Brownian motion with caloric functions,  apply a martingale transform, and then project back to $L^p(\mathbb{R}^n)$ using conditional  expectation.  This yields a large class of Fourier multipliers that includes the Beurling-Ahlfors transform with $L^p$ bounds that are multiples of $p^*-1$.

In \cite{BanBog1} and \cite{BanBog2}, it was shown that interesting Fourier multipliers can also be obtained by considering the conditional expectation of martingale transforms involving more general L\'evy processes in place of Brownian motion. In particular, in \cite{BanBog2}, using the symmetric $\alpha-$stable process, $0<\alpha<2$, and Burkholder's sharp martingale transform inequalities,   it is shown that for all $\varphi\in L^\infty(\mathbb{S}^{n-1})$, $\|\varphi\|_\infty\leq 1$, the operator defined by $\widehat{T_{m_\alpha}f}(\xi) = m_{\alpha}(\xi)\widehat{f}(\xi)$ where

\begin{equation*}
m_\alpha(\xi) = \frac{\int_{\mathbb{S}^{n-1}} |\xi\cdot\theta|^\alpha \varphi(\theta)d\sigma(\theta)}{\int_{\mathbb{S}^{n-1}} |\xi\cdot\theta|^\alpha d\sigma(\theta)},
\end{equation*}

\noindent is bounded on $L^p(\mathbb{R}^n)$, $1<p<\infty,$ with 
\begin{equation}
\|T_{m_\alpha} f\|_p \leq (p^*-1) \|f\|_p\quad\text{for all }f\in L^p(\mathbb{R}^n).
\end{equation}

When $n=2$, the  choice  of $\varphi(\theta)=e^{2i\arg(\theta)}$ yields 
\begin{equation}
\label{BeurRep} m_{\alpha}(\xi) = \frac{\alpha}{\alpha+2} \frac{\bar{\xi}}{\xi} 
\end{equation}
and therefore 
$$
T_{m_\alpha}f=\frac{\alpha}{\alpha+2} Bf.  
$$
Letting $\alpha\nearrow 2$, we recover the estimate $\|B\|_p\leq 2(p^*-1)$ which was proved in \cite{NazVol} and \cite{BanMen}. 

The formula (\ref{BeurRep}) does not depend on the fact that $0<\alpha<2$.  That is, for all $r>0$, 

\begin{equation*}
m_r(\xi) = \frac{\int_{\mathbb{S}^1} |\xi\cdot\theta|^r e^{2i\arg(\theta)}d\sigma(\theta)}{\int_{\mathbb{S}^1} |\xi\cdot\theta|^r d\sigma(\theta)}=\frac{r}{r+2} \frac{\bar{\xi}}{\xi}.
\end{equation*}
In  fact it is clear that for any $r>0$ and any $\varphi\in L^\infty(\mathbb{S}^{n-1})$ 

\begin{equation}
\label{m} m_r(\xi) = \frac{\int_{\mathbb{S}^{n-1}} |\xi\cdot\theta|^r\varphi(\theta) d\sigma(\theta)}{\int_{\mathbb{S}^{n-1}} |\xi\cdot\theta|^r d\sigma(\theta)}
\end{equation}
gives rise to a Fourier multiplier which is bounded on $L^2(\mathbb{R}^n)$. However, for $r>2$,  it is unknown if this multiplier arises from martingale transforms of any kind (see remark \ref{McCo}) and its boundedness on $L^p(\R^n)$ for any $p\not =2$ is by itself an interesting problem.  This motivated the following conjecture which appeared in \cite{Ban1}.

\begin{conjecture}\label{Conj} Let $n\geq 2$, $0<r<\infty$, $\varphi\in L^\infty(\mathbb{S}^{n-1})$, $\|\varphi\|_\infty\leq 1$, and let $m_r$ be defined as in (\ref{m}). Then the corresponding operator, $T_{m_r}$, is bounded on $L^p(\mathbb{R}^n)$ for all $1<p<\infty$ and
\begin{equation*}
\|T_{m_r} f\|_p \leq (p^*-1) \|f\|_p,  \quad\text{for all } f\in L^p(\mathbb{R}^n).
\end{equation*}
\end{conjecture}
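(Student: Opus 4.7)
The plan is to apply a method-of-rotations decomposition that reduces the $n$-dimensional multiplier $T_{m_r}$ to an average over $\theta \in \mathbb{S}^{n-1}$ of operators adapted to the one-dimensional line $\mathbb{R}\theta$, for each of which the Gundy--Varopoulos representation combined with Burkholder's sharp martingale inequality will produce the constant $(p^*-1)$.

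Using the identity $\int_{\mathbb{S}^{n-1}}|\xi\cdot\theta|^r\,d\sigma(\theta) = c_{n,r}|\xi|^r$, with $c_{n,r} = \int_{\mathbb{S}^{n-1}} |\eta\cdot\theta|^r\,d\sigma(\theta)$ for any unit vector $\eta$, I would rewrite
$$m_r(\xi) = \frac{1}{c_{n,r}}\int_{\mathbb{S}^{n-1}}\varphi(\theta)\frac{|\xi\cdot\theta|^r}{|\xi|^r}\,d\sigma(\theta)$$
and pass this to the operator level to obtain
$$T_{m_r} f = \frac{1}{c_{n,r}} \int_{\mathbb{S}^{n-1}} \varphi(\theta)\,A_\theta^r f\,d\sigma(\theta),$$
where $A_\theta^r$ denotes the Fourier multiplier with zero-homogeneous symbol $|\xi\cdot\theta|^r/|\xi|^r$. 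By Minkowski's integral inequality, the conjecture reduces to producing a uniform fibrewise estimate for $A_\theta^r$ whose weighted average against $|\varphi(\theta)|\,d\sigma(\theta)/c_{n,r}$ is at most $(p^*-1)\|f\|_p$.

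The next step is to realise $A_\theta^r$ as the projection of a martingale transform driven by a one-dimensional process along the line $\mathbb{R}\theta$, with transform coefficient of sup-norm at most one, so that Burkholder's sharp inequality applies. For $r \in (0,2)$, this one-dimensional process is the symmetric $r$-stable process in direction $\theta$, and the construction of \cite{BanBog2} applied fibrewise gives exactly what we want; integration over the sphere then reproduces the result of \cite{BanBog2}.

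The hard part will be the case $r \geq 2$, where $|\xi \cdot \theta|^r$ is no longer the characteristic exponent of any L\'evy process and the Gundy--Varopoulos representation breaks down. A natural attempt is to iterate: pick an integer $k$ with $r/k \in (0,2)$ and compose $k$ copies of the $r/k$-stable construction, but this \emph{a priori} yields the bound $(p^*-1)^k$ rather than $(p^*-1)$. Recovering the sharp constant after iteration would require exploiting orthogonality or commutation relations between successive transforms in the spirit of \cite{BanJan}. Producing a clean fibrewise martingale representation for all $r > 0$ that preserves the coefficient bound $\|H\|_\infty \le 1$ in a single transform step appears to be the essential difficulty standing between the method of rotations and the full conjecture.
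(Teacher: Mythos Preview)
The statement you are attempting is an open \emph{conjecture}; the paper explicitly does not prove it, noting that it would imply Iwaniec's conjecture on $\|B\|_p$. What the paper actually proves are Theorems~\ref{main} and~\ref{second}, which use exactly the method-of-rotations decomposition you write down, $T_{m_r}f = c_{n,r}^{-1}\int_{\mathbb{S}^{n-1}}\varphi(\theta)\,T_{m_\theta}f\,d\sigma(\theta)$ with $m_\theta(\xi)=|\xi\cdot\theta|^r/|\xi|^r$, and then bound each $T_{m_\theta}$ via the Marcinkiewicz (respectively H\"ormander--Mikhlin) multiplier theorem. After Minkowski this yields $\|T_{m_r}f\|_p\le C_n(p^*-1)^{6n}\,\Gamma(\tfrac{r+n}{2})/\Gamma(\tfrac{r+1}{2})\,\|f\|_p$, far from the conjectured $(p^*-1)$.

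Your proposed sharpening has two concrete gaps, the first of which you do not flag. First, the operator $A_\theta^r=T_{m_\theta}$ is \emph{not} the projection of a martingale transform driven by a one-dimensional L\'evy process along $\mathbb{R}\theta$, even for $0<r<2$: its symbol has denominator $|\xi|^r$, which depends on all coordinates of $\xi$, whereas any process supported on the line $\mathbb{R}\theta$ has L\'evy exponent depending only on $\xi\cdot\theta$ and can only produce multipliers that are functions of $\xi\cdot\theta$ alone. The known $(p^*-1)$ bound for $0<r<2$ comes from applying \cite{BanBog2} directly to the full $n$-dimensional $r$-stable process, not through this decomposition. Second, and more structurally, the Minkowski step is inherently lossy: even if some martingale representation gave $\|T_{m_\theta}f\|_p\le (p^*-1)\|f\|_p$ uniformly in $\theta$ (as Burkholder's inequality would for any transform with coefficient bounded by $1$), Minkowski would output only $\|T_{m_r}f\|_p\le (p^*-1)\,\omega_{n-1}\,c_{n,r}^{-1}\,\|f\|_p$, and $\omega_{n-1}/c_{n,r}>1$ for every $r>0$, growing like $r^{(n-1)/2}$. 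This extra factor is precisely the $\Gamma(\tfrac{r+n}{2})/\Gamma(\tfrac{r+1}{2})$ appearing in Theorem~\ref{main}, and it is why the rotation method by itself cannot reach the conjectured constant.
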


This is a very strong conjecture, which if true would imply Iwaniec's conjecture  \cite{Iwa} that $\|B\|_p\leq p^*-1$. The main results of this paper concern the  boundedness of these multipliers on $L^p(\R^n)$ for all $1<p<\infty$ with some information on the constant. More precisely, we prove the following two theorems.

\begin{theorem}
\label{main} Let $n\geq 2$, $0<r<\infty$, $\varphi\in L^\infty(\mathbb{S}^{n-1})$, $\|\varphi\|_{\infty}\leq1$, and let $m_r$ be defined as in (\ref{m}). Then the corresponding operator, $T_{m_r}$, is bounded on $L^p(\mathbb{R}^n)$ for all $1<p<\infty$ and
\begin{equation*}
\|T_{m_r}f\|_p \leq C_n(p^*-1)^{6n}\frac{\Gamma(\frac{r+n}{2})}{\Gamma(\frac{r+1}{2})}\|f\|_p, \quad\text{for all }f\in L^p(\mathbb{R}^n), 
\end{equation*}
where $C_{n}$ is a constant  which depends only on $n.$
\end{theorem}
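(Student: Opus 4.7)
The plan is to apply a Calder\'on--Zygmund method of rotations. By rotational invariance of the surface measure on $\mathbb{S}^{n-1}$, the denominator in $m_r$ equals $c_{n,r}|\xi|^r$, where $c_{n,r}:=\int_{\mathbb{S}^{n-1}}|\theta_1|^r\,d\sigma(\theta)$, so Fubini yields
\begin{equation*}
T_{m_r}f = \frac{1}{c_{n,r}}\int_{\mathbb{S}^{n-1}}\varphi(\theta)\,S_\theta f\,d\sigma(\theta),
\end{equation*}
where $S_\theta$ is the Fourier multiplier with symbol $|\xi\cdot\theta|^r/|\xi|^r$. Minkowski's integral inequality together with rotational invariance of the $L^p(\mathbb{R}^n)$-operator norm then gives
\begin{equation*}
\|T_{m_r}f\|_p \leq \frac{\|\varphi\|_\infty\,\sigma(\mathbb{S}^{n-1})}{c_{n,r}}\,\|S_{e_1}\|_{p\to p}\,\|f\|_p,
\end{equation*}
where $S_{e_1}$ is the zonal multiplier with symbol $(|\xi_1|/|\xi|)^r$. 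A short computation using the Beta-function identity $c_{n,r}=\sigma(\mathbb{S}^{n-2})B((r+1)/2,(n-1)/2)$ gives $\sigma(\mathbb{S}^{n-1})/c_{n,r} = C_n\,\Gamma((r+n)/2)/\Gamma((r+1)/2)$ for a dimensional constant $C_n$, supplying exactly the Gamma factor in the theorem.

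The core of the argument is therefore to establish the uniform-in-$r$ estimate $\|S_{e_1}\|_{p\to p}\leq C_n'(p^*-1)^{6n}$. I would approach this by writing $|\xi_1|^2/|\xi|^2 = 1-|\xi'|^2/|\xi|^2$ and identifying $S_{e_1}=(I-A)^{r/2}$ in the functional calculus of the bounded positive operator $A$ whose symbol $|\xi'|^2/|\xi|^2$ can be realized as $-\sum_{j=2}^{n}R_j^2$ (with $R_j$ the $j$-th Riesz transform). Since the symbol of $S_{e_1}$ is pointwise bounded by $1$, the $L^2$-norm is trivially $1$. For general $L^p$ I would invoke Stein's complex-interpolation theorem on the analytic family $z\mapsto(I-A)^z$: on $\operatorname{Re}z=0$ the symbol is unimodular with $L^p$-norms of controlled polynomial growth in $|\operatorname{Im}z|$ (for example via H\"ormander--Mikhlin), while at $\operatorname{Re}z=N$ for integer $N$ the binomial expansion of $(I-A)^N$ together with the Ba\~nuelos--Wang estimate $\|R_j^2\|_p\leq p^*-1$ yields an $L^p$-bound governed by a fixed power of $p^*-1$. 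Interpolation across the strip then produces the desired $r$-independent estimate, with the exponent $6n$ reflecting the number of Riesz-transform factors accumulated in setting up bounded $H^\infty$-functional calculus for $A$ out of the $n-1$ transverse Riesz transforms $R_2,\ldots,R_n$.

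The main obstacle is precisely this uniform-in-$r$ bound on $S_{e_1}$. The naive binomial estimate of $(I-A)^{r/2}$ — namely $\|S_{e_1}\|_p\lesssim((n-1)(p^*-1)+1)^{r/2}$ — blows up exponentially in $r$, whereas the symbol itself is bounded by $1$ independently of $r$; so the argument cannot proceed term-by-term and must exploit cancellations via a subordination or interpolation scheme. Extracting a polynomial-in-$(p^*-1)$ constant independent of $r$, and verifying the required growth conditions along the two boundary lines of the Stein interpolation strip, is the technical heart of the proof.
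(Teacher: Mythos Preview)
Your reduction via the method of rotations --- computing the denominator as $c_{n,r}|\xi|^r$, averaging the directional multipliers $S_\theta$, applying Minkowski, and using rotational invariance to reduce to the single zonal multiplier $S_{e_1}$ with symbol $(|\xi_1|/|\xi|)^r$ --- matches the paper exactly, as does the Gamma-factor bookkeeping. The divergence is entirely in how you propose to obtain the uniform-in-$r$ bound $\|S_{e_1}\|_{p\to p}\leq C_n(p^*-1)^{6n}$.

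The paper's route is far more direct than yours: it simply verifies that $m_{e_1}(\xi)=(|\xi_1|/|\xi|)^r$ satisfies the Marcinkiewicz multiplier hypothesis with a constant $K$ independent of $r$. Concretely, it checks the pointwise inequality $|\xi_{i_1}\cdots\xi_{i_k}\,\partial_{i_1}\cdots\partial_{i_k}m_{e_1}(\xi)|\leq C_n$; by homogeneity this reduces to maximizing products of the form $r(r+2)\cdots(r+2k-2)\,\xi_1^{r}\xi_{i_1}^{2}\cdots\xi_{i_k}^{2}$ on the unit sphere, and an elementary Lagrange-multiplier computation shows the maximum is $O_n(1)$ because the polynomial factor $\sim r^k$ is exactly cancelled by the smallness of the maximizer. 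The exponent $6n$ then comes straight out of the constant in the Marcinkiewicz theorem --- it has nothing to do with Riesz-transform factors or functional calculus.

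Your Stein-interpolation scheme, as written, has two concrete gaps. First, on the line $\operatorname{Re}z=0$ the symbol $(|\xi_1|/|\xi|)^{2it}=\exp\bigl(2it\log(|\xi_1|/|\xi|)\bigr)$ does \emph{not} satisfy H\"ormander--Mikhlin: differentiating in $\xi_1$ produces a factor $(|\xi|^2-\xi_1^2)/(\xi_1|\xi|^2)$, so $|\xi|\,|\partial_{\xi_1}m|\sim |t|\,|\xi'|^2/(|\xi_1|\,|\xi|)$ blows up as $\xi_1\to 0$, and the $L^2$-average over any annulus diverges. (The Marcinkiewicz condition \emph{does} survive here, since $\int_{2^l}^{2^{l+1}}d\xi_1/\xi_1=\log 2$ --- but if you are willing to invoke Marcinkiewicz on the boundary you might as well invoke it directly on $(|\xi_1|/|\xi|)^r$, which is what the paper does.) Second, on the line $\operatorname{Re}z=N$ the binomial expansion gives only $\|(I-A)^N\|_p\lesssim\bigl(1+(n-1)(p^*-1)\bigr)^N$, which grows with $N$; since you must take $N\geq r/2$ to place a given $r$ inside the strip, this does not yield an $r$-independent constant. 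The appeal to ``bounded $H^\infty$-functional calculus'' is precisely the hard statement you are trying to prove, and you have not indicated how to establish it independently.
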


\begin{remark}
Sterling's formula implies that if $a>0$
\begin{equation*}
\frac{\Gamma(x+a)}{\Gamma(x)} = O(x^a) \quad\text{as }x\rightarrow\infty.
\end{equation*}
Therefore,
\begin{equation*} 
 \frac{\Gamma(\frac{r+n}{2})}{\Gamma(\frac{r+1}{2})}
= O(r^{(n-1)/2}) \quad \text{as } r\rightarrow\infty.
\end{equation*}

\end{remark}

In the case that $r$ is sufficiently large, we can use the H\"{o}rmander-Mikhlin multiplier theorem to obtain estimates on the $L^p$ bounds of $T_{m_r}$ that are linear in $p$ as $p\rightarrow \infty$. 

\begin{theorem}

\label{second}Let $n\geq 2$ and define $n_0=\lfloor\frac{n}{2}\rfloor+1$.  Let  $n_0\leq r<\infty$, $\varphi\in L^\infty(\mathbb{S}^{n-1})$, $\|\varphi\|_{\infty}\leq1$, and let $m_r$ be defined as in (\ref{m}). Then the corresponding operator, $T_{m_r}$, is bounded on $L^p(\mathbb{R}^n)$ for all $1<p<\infty$ and
\begin{equation*}\|T_{m_r}f\|_p\leq C_n \max\{r^{n_0},1\} (p^*-1)\|f\|_p,  \quad\text{for all }f\in L^p(\mathbb{R}^n),
\end{equation*}
where $C_n$ is a constant depending only on $n$.
Furthermore, $T_{m_r}$ is weak-type $(1,1)$ and 
\begin{equation*}
|\{T_{m_r}f(x)>\lambda\}| \leq C_n\max\{r^{n_0},1\}\frac{\|f\|_1}{\lambda}.
\end{equation*}

\end{theorem}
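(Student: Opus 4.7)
The plan is to deduce both bounds from the Hörmander-Mikhlin multiplier theorem, which states that if $m\in C^{n_0}(\R^n\setminus\{0\})$ satisfies $|\partial^\alpha m(\xi)|\leq B\,|\xi|^{-|\alpha|}$ for every multi-index $|\alpha|\leq n_0$, then the associated operator is bounded on $L^p(\R^n)$, $1<p<\infty$, with norm at most $C_nB(p^*-1)$ and is of weak-type $(1,1)$ with constant at most $C_nB$. The entire proof therefore reduces to verifying this differential condition with $B=C_n r^{n_0}$. To begin, I would write $m_r=N/D$ with $N(\xi)=\int_{\mathbb{S}^{n-1}}|\xi\cdot\theta|^r\varphi(\theta)\,d\sigma(\theta)$ and $D(\xi)=\int_{\mathbb{S}^{n-1}}|\xi\cdot\theta|^r\,d\sigma(\theta)$, note that both are homogeneous of degree $r$ so that $m_r$ is $0$-homogeneous, and reduce by scaling to the task of bounding $|\partial^\alpha m_r(\xi)|\leq C_n r^{|\alpha|}$ on the unit sphere. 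Since $D$ is rotationally invariant, on the unit sphere $D(\xi)\equiv D_0(r):=c_n\,\Gamma(\tfrac{r+1}{2})/\Gamma(\tfrac{r+n}{2})$ is a positive constant.

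The core computation is the derivative estimate. Because $r\geq n_0\geq k:=|\alpha|$, direct differentiation under the integral sign is legitimate and yields
\begin{equation*}
|\partial^\alpha N(\xi)|\leq r(r-1)\cdots(r-k+1)\int_{\mathbb{S}^{n-1}}|\xi\cdot\theta|^{r-k}|\theta^\alpha|\,d\sigma(\theta)\leq r^k D_0(r-k),
\end{equation*}
with the same bound for $|\partial^\alpha D(\xi)|$, where one uses $\|\varphi\|_\infty\leq 1$ and $|\theta^\alpha|\leq 1$. Applying Fa\`a di Bruno's formula to $m_r=N/D$ then expresses $\partial^\alpha m_r$ on the unit sphere as a finite sum of terms of the form $\partial^{\beta_0}N\cdot\prod_{i\geq 1}\partial^{\beta_i}D\,/\,D_0(r)^{j+1}$ with $\sum_i\beta_i=\alpha$, each bounded by $r^k\prod_{i\geq 0}\bigl(D_0(r-|\beta_i|)/D_0(r)\bigr)$. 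Using the Gamma-function identity
\begin{equation*}
\frac{D_0(r-k)}{D_0(r)}=\frac{\Gamma(\tfrac{r-k+1}{2})\,\Gamma(\tfrac{r+n}{2})}{\Gamma(\tfrac{r+1}{2})\,\Gamma(\tfrac{r-k+n}{2})},
\end{equation*}
together with Stirling's asymptotics for large $r$ and continuity for bounded $r$, these ratios are uniformly bounded by a constant $C_n$ whenever $r\geq n_0$ and $k\leq n_0$. Consequently $|\partial^\alpha m_r(\xi)|\leq C_n r^k$ on the unit sphere, and by $0$-homogeneity $|\partial^\alpha m_r(\xi)|\leq C_n r^k|\xi|^{-k}$ throughout $\R^n\setminus\{0\}$. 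Since $r\geq 1$ and $k\leq n_0$, the Mikhlin condition holds with $B=C_n r^{n_0}$, so invoking Hörmander-Mikhlin yields both conclusions of the theorem.

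The principal technical obstacle, and the one place where the hypothesis $r\geq n_0$ is essential, is the uniform control of the Gamma-ratio $D_0(r-k)/D_0(r)$ across the full range $1\leq k\leq n_0$, $r\geq n_0$. If $r$ were smaller than $k$, the integral defining $D_0(r-k)$ (equivalently, the integrand $|\xi\cdot\theta|^{r-k}$) would fail to be integrable near $\xi\cdot\theta=0$, and the derivative bound for $N$ would blow up; this is precisely why the cruder Theorem \ref{main} is needed to handle the regime $r<n_0$.
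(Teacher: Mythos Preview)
Your proof is correct and follows essentially the same route as the paper: verify the H\"ormander--Mikhlin condition by reducing to the unit sphere via homogeneity, differentiate the integrand $|\xi\cdot\theta|^r$ to pick up factors of $r$ and lower the exponent, and control the resulting spherical integrals $\int_{\mathbb{S}^{n-1}}|\xi\cdot\theta|^{r-k}\,d\sigma(\theta)$ against the original denominator using Stirling bounds on the Gamma-function ratio. The only organizational difference is that the paper evaluates the denominator in closed form as $D(\xi)=A_{n,r}|\xi|^r$ and differentiates $|\xi|^{-r}$ directly via Leibniz on $m_\theta(\xi)=|\xi\cdot\theta|^r|\xi|^{-r}$, whereas you keep $D$ in integral form and invoke the higher-order quotient rule; both produce the same $C_n r^{n_0}$ Mikhlin constant.
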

\begin{remark}
Comparing the estimates in theorem \ref{main} and theorem \ref{second}, we see that each has some advantages over the other. The constants obtained in theorem \ref{main} have slower growth as $r\rightarrow \infty$ than those obtained in theorem \ref{second} and have the advantage of being valid for all $r>0$. On the other hand, theorem \ref{second} gives  estimates which are linear in $p$ as $p\rightarrow \infty$ and includes weak-type (1,1) estimates which theorem \ref{main} does not. This is because the proof of theorem \ref{main} involves the method of rotations and the Marcinkiewicz multiplier theorem, neither of which give weak-type inequalities. We also remark that it is unknown if the operators which are obtained in \cite{BanBog1} and \cite{BanBog2} satisfy weak-type (1,1) inequalities. While it is true that martingale transforms do satisfy weak-type (1,1) estimates, these estimates are not preserved under conditional expectation. Weak-type estimates for the operators constructed in \cite{BanMen} and \cite{BanWan} were proved  in \cite{Perl} using the Calder\'on--Zygmund theory.  
\end{remark}

The rest of this paper is organized as follows. In section 2, we will give background information regarding L\'evy processes and their use in studying Fourier multipliers. In sections 3  and 4 we will prove theorems \ref{main} and \ref{second}  respectively. Lastly, in section 5 we will demonstrate how the methods use to prove theorem \ref{main} may be used to study other L\'evy multipliers.   Throughout this paper we will use the following notation. If $m(\xi)$ is a bounded complex-valued function on $\mathbb{R}^n$, $T_m$ shall refer to the operator on $L^2(\mathbb{R}^n)$ defined by $\widehat{T_mf}(\xi) = m(\xi)\widehat{f}(\xi)$. If, for some $1<p<\infty$, $T_m$ admits a bounded extension to $L^p(\mathbb{R}^n)$, than we shall say that $m$ is an $L^p$ multiplier. We shall also assume that $n\geq 2$ for the rest of the paper.

\section{Background}

A L\'evy process on $\mathbb{R}^n$ is an $\mathbb{R}^n-$valued stochastic process, $(X_t)_{t\geq 0}$, which almost surely starts at the origin, has stationary, independent increments, and satisfies the stochastic continuity condition $\lim_{t \searrow 0} \mathbb{P}(|X|_t > \epsilon) = 0$ for all  $\epsilon >0$. The famous L\'evy-Khintchine formula states that if $X_t$ is any L\'evy process, there exists a point $b\in \mathbb{R}^n$, a non-negative symmetric $n\times n$ matrix $B$, and a measure $\nu$ such that $\nu(\{0\})=0$ and $$\int_{\mathbb{R}^n} \min\{|z|^2,1\} d\nu(z) < \infty,$$ such that the characteristic function of $X_t$ is given by $\mathbb{E}(e^{i\xi \cdot X_t}) = e^{t \rho(\xi)}$ where
\begin{equation*}
\rho(\xi) = i b\cdot \xi -\frac{1}{2}B\xi\cdot\xi + \int_{\mathbb{R}^n}\left[ e^{i\xi\cdot z} -1 -i(\xi\cdot z)\mathbb{I}_{(|z|<1)}\right] \nu(dz).
\end{equation*}
$(b,B,\nu)$ is referred to as the L\'evy triple of $X_t$. The triple $(b,0,0)$ corresponds to a drift process $X_t = bt$; $(0,B,0)$ corresponds to a centered Gaussian process with whose covariance is given by $[X^{i}_s,X^{j}_t] = b_{i,j}\min\{s,t\}$; and $(0,0,\nu)$ corresponds to a ``pure-jump'' process. In particular, $(0,I,0)$ corresponds to standard Brownian motion with $\rho(\xi)= -\frac{1}{2}|\xi|^2$, and for $0<\alpha<2$, $(0,0,d\nu(z) = C_{n,\alpha}\frac{1}{|z|^{n+\alpha}}dz)$ corresponds the symmetric $\alpha$-stable process with $\rho(\xi)= -|\xi|^\alpha$. If $X_t$ and $Y_t$ are independent L\'evy processes with triples $(b_X,B_X,\nu_X)$ and $(b_Y,B_Y,\nu_Y)$, then $X_t+Y_t$ is a L\'evy process with the triple $(b_X+b_Y,B_X+B_Y,\nu_X+\nu_Y)$. Therefore, the L\'evy-Khinchtine formula says that any L\'evy process can be decomposed into the sum of three independent L\'evy processes, a drift process, a centered Gaussian process, and a pure jump process.

L\'evy processes have been extensively used to study the $L^p$ boundedness of Fourier multipliers and in particular certain Calder\'on-Zygmund singular integrals. In this section, we will present a summary of two constructions which can be used to study the Beurling-Ahlfors transform. For further details of these two constructions we refer the reader to \cite{BanMen} and to \cite{BanBog1}, \cite{BanBog2} respectively. For examples of how related methods have been used to study other operators, we refer the reader to \cite{AppBan}, \cite{Ban1}, and \cite{Ban2}. In all of these cases, the method is based on the same fundamental idea. For a function $f$ in $L^p(\mathbb{R}^n)$, we construct a martingale $M(f)_t$ such that $\sup_{t}\|M(f)_t\|_p = \|f\|_p$. Then we apply a martingale transformation to get a new martingale, $N(f)_t$, such that $\sup_{t}\|N(f)_t\|_p \leq C_p \sup_{t}\|M(f)_t\|_p$. Finally, we project $N(f)_t$ onto $L^p(\mathbb{R}^n)$ using conditional expectation to get a new function which we denote by $Sf(x)$. Conditional expectation is a contraction on $L^p(\mathbb{R}^n)$ so $\|Sf\|_p \leq \sup_{t}\|N(f)\|_p$. Combining these three inequalities yields $\|Sf\|_p\leq C_p \|f\|_p$. If appropriate choices are made at each step, this operator  will coincide with an operator of classical interest in analysis such as the Beurling-Ahlfors transform.

In \cite{BanMen}, this procedure was carried out using martingales involving space-time Brownian motion. For $f\in L^p(\mathbb{R}^n)$, we consider $V_f(x,t)=\mathbb{E}(f(B_t+x)|B_0=0)=(p_t \ast f) (x)$, where $p_t(x)=\frac{1}{(2\pi t)^{n/2}}e^{-|x|^2/2t}$ is the heat kernel for the half Laplacian and $B_t$ is Brownian motion in $\mathbb{R}^n$ with initial distribution given by the Lebesgue measure. For fixed $T>0$, the process $(Z_t)_{0\leq t\leq T}=(B_t,T-t)_{0\leq t\leq T}$ is called space-time Brownian motion. It\^{o}'s formula shows that $V_f(Z_t)_{0\leq t\leq T}$ is a martingale
and  
\begin{equation*}
V_f(Z_t)-V_f(Z_0)= \int_0^t \nabla_x V_f(Z_s)\cdot dB_s.
\end{equation*}
Furthermore, 
\begin{align*}
\|V_f(Z_t)\|^p_p &= \mathbb{E}|V_f(Z_t)|^p = \int_{\mathbb{R}^n} \mathbb{E}^x|V_f(Z_t)|^pdx \\
&=\int_{\mathbb{R}^n} \int_{\mathbb{R}^n} p_t(x-y) |f(y)|^p dydx = \int_{\mathbb{R}^n} |f(y)|^p dy = \|f\|_p^p.
\end{align*}

\noindent For any $n\times n$ matrix-valued function, $A(s)$, $ s >0$ such that 
\begin{equation*}
\|A\|=\sup_{s}\sup_{|v|\leq 1}\{|A(s)v|\} \leq 1,
\end{equation*}
we define the martingale transform of $V_f(Z_t)$ by $A(s)$ as
\begin{equation*}
A \star V_f(Z_t) = \int_0^t A(s)\nabla_x V_f(Z_s)\cdot dB_s.
\end{equation*}

The quadratic variations of $V_f(Z_t)$ and $A\star V_f(Z_t)$ are given by 
\begin{equation*}
[V_f(Z)]_t = \int_0^t |\nabla_x  V_f(Z_s)|^2 ds \quad\text{and}\quad
[A\star V_f(Z)]_t = \int_0^t |A(s)\nabla_x  V_f(Z_s)|^2 ds.
\end{equation*}
Since $\|A\|\leq 1$,
$A\star V_f(Z_t)$ is differentially subordinate to $V_f(Z_t)$, that is, the process $[V_f(Z)]_t-[A\star V_f(Z)]_t$ is non-decreasing. Therefore, by  Burkholder's celebrated theorem (see \cite{Bur3}) we have that
\begin{equation*}
\sup_t\|A\star V_f(Z_t)\|_p \leq (p^*-1) \sup_t\|V_f(Z_t)\|_p = (p^*-1)\|f\|_p.
\end{equation*}
 To project $A\star V_f(Z_t)$ back onto $L^p(\mathbb{R}^n)$ we define
\begin{equation*}
S_A^Tf(x) = E\left(\int_0^T A(T-s)\nabla_x V_f(Z_s)\cdot dB_s | B_T=x\right).
\end{equation*}
$S_A^T$ is a bounded linear operator on $L^p(\mathbb{R}^n)$ with 
\begin{equation*}
\|S_A^Tf(x)\|_p \leq (p^*-1)\|f\|_p.
\end{equation*}
Moreover, $S_A^T$ is a Fourier multiplier with 
\begin{equation*}
\widehat{S_A^Tf}(\xi) = \left(4\pi^2\int_0^T A(s)\xi\cdot \xi e^{-4\pi^2s|\xi|^2} ds\right) \widehat{f}(\xi).
\end{equation*}
Letting $T\rightarrow \infty$, we see that the limiting operator defined by 
\begin{equation}
\label{heatmult}\widehat{S_Af}(\xi) = \left(4\pi^2\int_0^\infty A(s)\xi\cdot \xi e^{-4\pi^2s|\xi|^2} ds\right) \widehat{f}(\xi)
\end{equation} 
is bounded on $L^p$  and
 \begin{equation}
\label{spacetimebound}\|S_Af(x)\|_p \leq (p^*-1)\|f\|_p.
\end{equation}
If $A(s)=A$ is constant, we can evaluate the integral in (\ref{heatmult}) and see that 
\begin{equation}
\label{constA} 4\pi^2\int_0^\infty A\xi\cdot \xi e^{-4\pi^2s|\xi|^2} ds = \frac{-\frac{1}{2}A\xi\cdot\xi}{-\frac{1}{2}|\xi|^2}.
\end{equation}
Recalling that for Brownian motion the L\'evy exponent is given by $\rho(\xi) = -\frac{1}{2}|\xi|^2$, we can interpret this multiplier as a ``modulation'' of the L\'evy exponent divided by the ``unmodulated'' L\'evy exponent.  If we choose 

\begin{equation*}
A = \frac{1}{2}\left(\begin{array}{lr}
1&i\\
i&-1
\end{array}\right),
\end{equation*}
we see that $S_Af(x)=\frac{1}{2}Bf(x)$. Combining this with (\ref{spacetimebound}) yields the inequality  
\begin{equation}
\label{twobound} \|Bf\|_p \leq 2(p^*-1)\|f\|_p
\end{equation}
which was mentioned in the introduction.


In \cite{BanBog1} and \cite{BanBog2}, this construction was generalized by replacing Brownian motion with more general L\'evy processes.  Let $\nu$ be a L\'evy measure on $\mathbb{R}^n$, $\varphi$ a complex-valued function on $\mathbb{R}^n$ with $\|\varphi\|_\infty\leq 1$, let $\mu$ a finite Borel measure on $\mathbb{S}^{n-1}$, and $\psi$ a complex-valued function on $\mathbb{S}^{n-1}$ with $\|\psi\|_\infty\leq 1.$ Define $m_{\mu,\nu}(\xi)$ by
\begin{equation}
\label{Levym}m_{\mu,\nu}(\xi) = \frac{\int_{\mathbb{R}^n} (\cos(\xi \cdot z)-1)\varphi(z)\nu(dz)+A\xi\cdot\xi}{\int_{\mathbb{R}^n} (\cos(\xi \cdot z)-1)\nu(dz)+B\xi\cdot\xi}
\end{equation}
where 
\begin{equation*}
A = \left(\int_{\mathbb{S}^{n-1}} \theta_i\theta_j \psi(\theta) d\mu(\theta)\right)_{1\leq i,j\leq n} \quad\text{and}\quad B = \left(\int_{\mathbb{S}^{n-1}} \theta_i\theta_j d\mu(\theta)\right)_{1\leq i,j\leq n}.
\end{equation*}
Note that $(\cos(\xi\cdot z)-1) = \Re(e^{i\xi\cdot z}-1-i(\xi\cdot z)\mathbb{I}_{(|z|<1)}$). Therefore, similarly to (\ref{constA}), $m_{\mu,\nu}$ may be interpreted as a ``modulation'' of the real part of the L\'evy exponent of some process, $X_t$, divided by the ``unmodulated'' real part of the L\'evy exponent of $X_t$. The primary result of \cite{BanBog2} is to show that $m_{\mu,\nu}$   is an $L^p$ multiplier for all $1<p<\infty$ and
\begin{equation*}
\|T_{m_{\mu,\nu}}f\|_p\leq (p^*-1)\|f\|_p \quad\text{for all }f\in L^p(\mathbb{R}^n).
\end{equation*}

We will now give a brief summary of how this multiplier is obtained in the case where $\mu=0$ and $\nu$ is symmetric and finite, which corresponds to $X_t$ being a compound Poisson process. (The general case can then be proved by symmetrization and approximation arguments, see \cite{BanBog2} for details.) 
 Similarly to \cite{BanMen}, we fix $T>0$, let $(Z_t)_{0\leq t\leq T} = (X_t,T-t)_{0\leq t\leq T}$, and let $V_f(x,t) = P_tf(x) = \mathbb{E}^T(f(X_t+x))$. It is shown in \cite{BanBog1} that $V_f(Z_t)$ is a martingale, with $\sup_{t}\|V_f(Z_t)\|_p=\|f\|_p$ for all $1<p<\infty$, and by the generalized It\^{o}'s formula (see for example \cite{Protter})

\begin{equation*}
V_f(Z_t)-V_f(Z_0) = \int_0^{t+}\int_{\mathbb{R}^n} [V_f(Z_{s-}+z)-V_f(Z_{s-})] \tilde{N}(ds,dz),
\end{equation*}
where $Z_{s-} = \lim_{u\nearrow s}Z_u$, and $\tilde{N}$ is the so called compensator, defined for each fixed $t>0$ on Borel sets of $ \mathbb{R}^n$ by 
\begin{equation*}
\tilde{N}(t,A) = N(t,A) -t\nu(A) 
\end{equation*}
where $N$ is a Poisson random measure that descibes the jumps of $X_t$, i.e. 
\begin{equation*}
N(t,A) = |\{ s: 0\leq s\leq t, X_{s}-X_{s-}\in A\}|.
\end{equation*}

Therefore if $\varphi:\mathbb{R}^n\rightarrow \mathbb{C}$ with $\|\varphi\|_\infty\leq 1$, we can define the martingale transform of $V_f(Z_t)$ by $\varphi$ as 
\begin{align*}
\varphi\star V_f(Z_t)= \int_0^{t+}\int_{\mathbb{R}^n} [V_f(Z_{s-}+z)-V_f(Z_{s-})]\varphi(z) \tilde{N}(ds,dz).
\end{align*}
The quadratic variations of $V_f(Z_t)$ and $\varphi \star V_f(Z_t)$ are given by
\begin{equation*}
[V_f(Z)]_t = \int_0^{t+}\int_{\mathbb{R}^n} |V_f(Z_{s-}+z)-V_f(Z_{s-})|^2 N(ds,dz)
\end{equation*}
\noindent and
\begin{equation*}
[\varphi\star V_f(Z)]_t = \int_0^{t+}\int_{\mathbb{R}^n} |V_f(Z_{s-}+z)-V_f(Z_{s-})|^2 |\varphi(z)|^2N(ds,dz).
\end{equation*}
Therefore, $\varphi\star V_f(Z_t)$ is differentially subordinate to $V_f(Z_t)$ and 
\begin{equation*}
\sup_t\|\varphi\star V_f(Z_t)\|_p \leq (p^*-1)\|f\|_p.
\end{equation*}

A projection operator can be defined by
\begin{equation*}
S^T_\varphi f(x) = \mathbb{E}^T(\varphi\star V_f(Z_T)|Z_T=(x,0))
\end{equation*}
and we again have that 
\begin{equation*}
\label{Tbound}\|S^T_\varphi f(x)\|_p \leq (p^*-1)\|f\|_p.
\end{equation*}
It is shown that as $T\rightarrow\infty$, a limiting operator, $S_\varphi$, exists and satisfies the bound 
\begin{equation*}
\|S_\varphi f(x)\|_p \leq (p^*-1)\|f\|_p.
\end{equation*}
Moreover, $S_\varphi$ is a Fourier multiplier and $\widehat{S_\varphi f}(\xi) = m_{\mu,\nu}(\xi)\widehat{f}(\xi)$.

A particularly interesting class of operators occurs when we take $X_t$ to be the rotationally symmetric $\alpha$-stable process with $0<\alpha<2$ and assume that $\varphi$ is homogeneous of order zero. In polar coordinates, we may write $d\nu(z) = C_{n,\alpha}r^{-1-\alpha}drd\sigma(\theta)$ where $C_{n,\alpha}$ is a constant chosen so that 
\begin{equation*}
\rho(\xi) = \int_{\mathbb{R}^n}(\cos(\xi\cdot z)-1) d\nu(z) = -|\xi|^\alpha.
\end{equation*}
In this case, the numerator of (\ref{Levym}) is given by 
\begin{align*}
C_{n,\alpha}\int_{\mathbb{R}^n}(\cos(\xi\cdot z)-1)\varphi(z) d\nu(z) &= C_{n,\alpha} \int_{\mathbb{S}^{n-1}}\varphi(\theta)\int_0^\infty \cos(r\xi\cdot\theta)r^{-1-\alpha} dr d\sigma(\theta)\\
&=C_{n,\alpha}\int_{\mathbb{S}^{n-1}}\varphi(\theta)|\xi\cdot\theta|^\alpha\int_0^\infty \cos(s)s^{-1-\alpha} ds d\sigma(\theta)\\
&=C'_{n,\alpha}\int_{\mathbb{S}^{n-1}}\varphi(\theta)|\xi\cdot\theta|^\alpha d\sigma(\theta).
\end{align*}
Therefore, the corresponding multiplier is given by 

\begin{equation*}
\label{alphamult}m_\alpha(\xi) = \frac{\int_{\mathbb{S}^{n-1}}|\xi\cdot\theta|^\alpha \varphi(\theta)d\sigma(\theta)}{\int_{\mathbb{S}^{n-1}}|\xi\cdot\theta|^\alpha d\sigma(\theta)}.
\end{equation*}
If we set $n=2$ and choose $\varphi(\theta)=e^{-2i\arg{\theta}}$, then it is shown in \cite{BanBog2} that $m_\alpha(\xi) = \frac{\alpha}{\alpha+2}\frac{\bar{\xi}}{\xi}$. Therefore, for all $0<\alpha<2$ and all $f\in L^p(\mathbb{R}^n)$
\begin{equation*}
\label{Balpha} \|Bf\|_p\leq \frac{\alpha+2}{\alpha} (p^*-1)\|f\|_p.
\end{equation*}
Letting $\alpha \nearrow 2$, we recover (\ref{twobound}). 

The condition $0<\alpha<2$ is natural from a probabilistic prospective. Otherwise, the measure $d\nu(z) = \frac{1}{|z|^{n+\alpha}}$ is not a L\'evy measure on $\mathbb{R}^n$. However, for any $r>0$, the multiplier
\begin{equation}
\label{mr}m_r(\xi) = \frac{\int_{\mathbb{S}^{n-1}}|\xi\cdot\theta|^r \varphi(\theta)d\sigma(\theta)}{\int_{\mathbb{S}^{n-1}}|\xi\cdot\theta|^r d\sigma(\theta)}
\end{equation}
satisfies $\|m_r\|_\infty\leq 1$. Therefore, $T_{m_r}$ is a bounded operator on $L^2(\mathbb{R}^n).$ Furthermore, for any $r>0$, if we choose $\varphi(\theta)=e^{-2i\arg{\theta}}$, the formula $T_{m_r}f(x) = \frac{r}{r+2}Bf(x)$ is valid for all $f\in C^\infty(\mathbb{R}^n)$. Therefore, if we could prove conjecture (\ref{Conj}), then letting $r\rightarrow\infty$ it would follow that $\|B\|_p\leq p^*-1$, and therefore the celebrated conjecture of Iwaniec would be proved. 
 Unfortunately, we are not able to prove conjecture (\ref{Conj}) in its entirety. We are, however, able to show that $m_r$ defined as in (\ref{mr}) is an $L^p$ multiplier for all $1<p<\infty$ and for all $r>0$.

The probabilistic methods used in \cite{BanBog1} and \cite{BanBog2} do not apply when $r\geq 2$.  This leads us to study $T_{m_r}$ through analytic methods. Two tools for doing so are the Marcinkiewicz mutliplier theorem and the H\"ormander-Mikhlin multiplier theorem which we state below for convenience.  For proofs of these results see \cite{Gra} or \cite{Ste2}.

 \begin{theorem} (Marcinkiewicz). \label{Marcinkiewicz}
Let $m\in L^\infty(\mathbb{R}^n)$ with $\|m\|_\infty \leq K$ for some $0<K<\infty$. Supposed that $m(\xi)$ is $n$-times continuously differentiable on the subset of $\mathbb{R}^n$ where none of the $\xi_i$ are zero. For $j\in \mathbb{Z}$, let $I_j$ denote the dyadic interval $(-2^{j+1},-2^{j}]\cup [2^{j},2^{j+1})$. Suppose that for all $1\leq k\leq n$, for all subsets $\{i_1,\ldots,i_k\}$ of $\{1,\ldots,n\}$ of order $k$, and for all integers $l_{i_1},\ldots l_{i_k}$, 
we have that 

\begin{equation}
\label{Marc}\int_{I_{l_{i_1}}}\ldots\int_{I_{l_{i_k}}} |\partial_{i_1}\ldots\partial_{i_k}m(\xi)|d\xi_{i_k}\ldots d\xi_{i_1} \leq K <\infty
\end{equation}
whenever $\xi_j\neq 0$ for all $j\notin\{i_1,\ldots,i_k\}$. Then $m(\xi)$ is an $L^p$ multiplier for all $1<p<\infty$ and

\begin{equation*} 
 \|T_mf\|_p \leq C_n K (p^*-1)^{6n}\|f\|_p \quad\text{for all }f\in L^p(\mathbb{R}^n),
\end{equation*}
where $C_n$ is a constant depending only on $n$.
 \end{theorem}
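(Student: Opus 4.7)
The plan is to use a dyadic Littlewood-Paley decomposition of $\mathbb{R}^n$ into the rectangles $R_{\vec l}=I_{l_1}\times\cdots\times I_{l_n}$ for $\vec l\in\mathbb Z^n$, reducing $L^p$ boundedness of $T_m$ to two ingredients: (i) a uniform rectangle-by-rectangle $L^p$ bound for $T_m$ restricted to each $R_{\vec l}$, extracted from hypothesis (\ref{Marc}) by Fourier series on the rectangle; and (ii) sharp iterated square-function inequalities with a constant of order $p^*-1$ per coordinate. For each coordinate $k\in\{1,\dots,n\}$ and each $l\in\mathbb Z$, I denote by $\Delta^{(k)}_l$ the Fourier projection onto $\{\xi:\xi_k\in I_l\}$ and set $\Delta_{\vec l}=\Delta^{(1)}_{l_1}\cdots\Delta^{(n)}_{l_n}$.

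For the rectangle-wise step I would expand $m\cdot\mathbb I_{R_{\vec l}}$ as a Fourier series on $R_{\vec l}$ and bound its coefficients by repeated integration by parts. Performing $k$ integrations by parts in the coordinates $\xi_{i_1},\dots,\xi_{i_k}$ picks up precisely the integral appearing in (\ref{Marc}), while the boundary contributions at each coordinate $\xi_j$ with $j\notin\{i_1,\dots,i_k\}$ are handled by restricting the same type of estimate to a face of the rectangle (and ultimately by the bound $\|m\|_\infty\le K$). Absolute summability of the resulting coefficient bounds yields $\|T_m\Delta_{\vec l}f\|_p\le C_n K\,\|\Delta_{\vec l}f\|_p$, uniformly in $\vec l$, and the same bound with arbitrary sign choices $\varepsilon_{\vec l}\in\{\pm 1\}$ inside the sum on the left.

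For the assembly step I would invoke the sharp one-dimensional Littlewood-Paley inequality in the form
\begin{equation*}
\Big\|\Big(\sum_{l\in\mathbb Z}|\Delta^{(k)}_l g|^2\Big)^{1/2}\Big\|_p\le c(p^*-1)\|g\|_p
\end{equation*}
together with the matching reverse inequality. Iterating this in all $n$ coordinates as vector-valued inequalities with values in $\ell^2(\mathbb Z^{n-1})$, $\ell^2(\mathbb Z^{n-2})$, and so on, produces the two-sided equivalence
\begin{equation*}
\|g\|_p\ \asymp\ \Big\|\Big(\sum_{\vec l\in\mathbb Z^n}|\Delta_{\vec l}\, g|^2\Big)^{1/2}\Big\|_p,
\end{equation*}
where the implicit constants in each direction are of order $(p^*-1)^{3n}$ (the cubing per coordinate is the cost of the standard randomization argument needed to extend the scalar sharp inequality to its $\ell^2$-valued version). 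Applying this to $g=T_m f$, using $\Delta_{\vec l}T_m=T_m\Delta_{\vec l}$, combining with the uniform rectangle bound via Khintchine's inequality on Rademacher sums, and then returning to the $L^p$ norm produces the claimed bound $\|T_m f\|_p\le C_n K(p^*-1)^{6n}\|f\|_p$; the exponent $6n=2\cdot 3n$ reflects the use of the upper iterated square-function bound once and the reverse once.

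The main obstacle is tracking the sharp $p^*-1$ scaling through the vector-valued iteration. The scalar square-function inequality for dyadic projections with constant of order $p^*-1$ is a martingale theorem in the spirit of Ba\~nuelos-Wang, but the $\ell^2$-valued extension at each of the $n$ coordinates must be done so that the constant remains controlled by a fixed power of $p^*-1$; this is where the cubing per coordinate arises and is the delicate part of the argument. By contrast, the rectangle-wise estimate is essentially classical and amounts to a careful bookkeeping of repeated integration by parts governed by the hypothesis (\ref{Marc}).
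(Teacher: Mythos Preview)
The paper does not give its own proof of this theorem: it is stated as background and the reader is referred to \cite{Gra} and \cite{Ste2} for a proof. So there is nothing in the paper to compare your argument against beyond those references.

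That said, your plan is precisely the classical route taken in those textbooks: an $n$-fold dyadic Littlewood--Paley decomposition, a uniform per-rectangle estimate obtained by expanding $m$ on each $R_{\vec l}$ and integrating by parts against the hypothesis~(\ref{Marc}), and then reassembly via iterated square-function inequalities together with randomization. The exponent $6n$ in the constant is exactly the bookkeeping that appears in Grafakos' version. One small caution on your justification of the sharp $p^*-1$ scaling: the Fourier-side dyadic projections $\Delta_l^{(k)}$ are not literally martingale differences, so the ``martingale theorem in the spirit of Ba\~nuelos--Wang'' is not the right black box here. In the textbook argument the linear-in-$p^*$ constant enters through the (vector-valued) Hilbert transform bound used to build the one-dimensional Littlewood--Paley inequality, and it is the iteration of that, with the Rademacher/Khintchine step, that produces the cubing per coordinate. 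Your overall strategy is correct; just be careful to source the $p^*-1$ factor from the Hilbert transform rather than from martingale differential subordination.
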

 
\begin{theorem} (H\"ormander-Mikhlin). Let $n_0 = \lfloor\frac{n}{2}\rfloor+1$, and let $m(\xi)$ be $n_0$-times differentiable on $\mathbb{R}^n\setminus\{0\}$. Suppose there exists $0<K<\infty$ such that $\|m\|_\infty \leq K$ and that also
\begin{equation}
\label{Hormander}\sup_{R>0} R^{-n+2|\beta|} \int_{R<|\xi|<2R} |\partial^\beta m(\xi)|^2 d\xi < K^2
\end{equation}
for all multi-indexes such that $|\beta|\leq n_0$. Then $m(\xi)$ is an $L^p$ multiplier for all $1<p<\infty$ and there exists $C_n$ depending only on $n$ such that
\begin{equation*}
\|T_mf\|_p \leq C_n K (p^*-1)\|f\|_p.
\end{equation*}
 \end{theorem}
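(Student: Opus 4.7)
The plan is to combine a Littlewood--Paley decomposition with classical Calder\'on--Zygmund theory in order to produce weak-type $(1,1)$ and $L^p$ estimates for $T_m$. First I would fix a smooth radial bump $\psi$ supported in the annulus $\{1/2\le|\xi|\le 2\}$ with $\sum_{j\in\mathbb{Z}}\psi(2^{-j}\xi)=1$ for $\xi\ne 0$, and decompose $m=\sum_{j}m_{j}$ where $m_{j}(\xi)=m(\xi)\psi(2^{-j}\xi)$. This localizes $m$ in frequency to an annulus of scale $2^{j}$, so the corresponding convolution kernel $K_{j}=\check{m}_{j}$ is band-limited and one may integrate by parts freely in its Fourier representation.

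For each $j$ I would estimate $K_{j}$ in $L^{1}$ by Plancherel combined with the hypothesis (\ref{Hormander}). Applied to $\partial^{\beta}m_{j}$ together with the Leibniz rule, whose derivatives hitting the rescaled bump $\psi(2^{-j}\cdot)$ produce an extra factor $2^{-j|\beta|}$, one obtains
\begin{equation*}
\int|x^{\beta}K_{j}(x)|^{2}\,dx \le C_{n}K^{2}\,2^{j(n-2|\beta|)} \qquad \text{for every }|\beta|\le n_{0}.
\end{equation*}
Because $2n_{0}>n$, the weight $(1+2^{j}|x|)^{-2n_{0}}$ is integrable on $\mathbb{R}^n$ with $L^{1}$ mass of order $2^{-jn}$, so Cauchy--Schwarz gives $\|K_{j}\|_{1}\le C_{n}K$ uniformly in $j$. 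A parallel argument applied to the difference $K_{j}(x-y)-K_{j}(x)$, using a first-order Taylor expansion on the Fourier side that produces an extra factor $2^{j}|y|$, yields the H\"ormander regularity estimate $\sum_{j}\int_{|x|>2|y|}|K_{j}(x-y)-K_{j}(x)|\,dx\le C_{n}K$ for every $y\ne 0$.

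With these kernel bounds in hand, $T_{m}$ is a Calder\'on--Zygmund operator with H\"ormander constant controlled by $C_nK$, and Plancherel gives $L^{2}$ operator norm bounded by $K$. Standard Calder\'on--Zygmund theory then produces the weak-type $(1,1)$ estimate, and Marcinkiewicz interpolation together with duality yields $L^{p}$ boundedness for all $1<p<\infty$.

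The hard part will be to extract the sharp linear dependence on $p^{*}-1$, which the soft form of Marcinkiewicz interpolation does not supply. For this I would bypass interpolation in favor of a quantitative good-$\lambda$ inequality comparing $T_{m}f$ pointwise to the Hardy--Littlewood maximal function $Mf$ and to the sharp maximal function $M^{\#}(T_{m}f)$, whose $L^{p}$ norms on the two sides of $p=2$ are known to scale like $p'$ and $p^{*}-1$ respectively. A careful tracking of the constants in Cotlar's inequality, combined with the uniform-in-$j$ kernel estimates above, should then yield the bound $\|T_{m}f\|_{p}\le C_{n}K(p^{*}-1)\|f\|_{p}$ asserted in the theorem.
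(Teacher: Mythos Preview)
The paper does not actually prove this theorem: it is stated in Section~2 as a classical result and the reader is referred to \cite{Gra} and \cite{Ste2} for proofs. So there is no ``paper's own proof'' to compare against; the statement is invoked as a black box in the proof of Theorem~\ref{second}.

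That said, your outline is essentially the standard textbook argument (Littlewood--Paley localization, Plancherel plus the $L^2$-dyadic hypothesis to control $\|K_j\|_1$ and the H\"ormander kernel condition, then Calder\'on--Zygmund theory), and it is correct in its broad strokes. One remark on the last paragraph: your worry that Marcinkiewicz interpolation cannot deliver the linear factor $p^*-1$ is misplaced. Interpolating between the weak-type $(1,1)$ bound with constant $C_nK$ and the $L^2$ bound with constant $K$ already produces a strong-type $(p,p)$ constant of order $C_nK/(p-1)$ for $1<p<2$, which is exactly $C_nK(p^*-1)$; duality then handles $p>2$. The good-$\lambda$ route you sketch also works but is not needed to recover the stated constant.
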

 
%
%

\section{The proof of theorem \ref{main}} \label{proof}
The main idea of the proof is to use a method of rotations to write $T_{m_r}$ as the weighted average of multipliers which can be studied using the Marcinkiewicz multiplier theorem.
\begin{proof}  We first observe (see \cite{Gra} Appendix D p. 443) that

\begin{equation}
\label{Grafak}\int_{\mathbb{S}^{n-1}} |\xi\cdot\theta|^r d\sigma(\theta)  = A_{n,r}|\xi|^r, 
\end{equation}
where $A_{n,r} = \frac{1}{2\pi^{(n-1)/2}}\frac{\Gamma(\frac{1+r}{2})}{\Gamma(\frac{n+r}{2})}$. Therefore,  
\begin{equation}
\label{newm} m_r(\xi) = A_{n,r}^{-1}\int_{\mathbb{S}^{n-1}} \frac{|\xi\cdot\theta|^r}{|\xi|^r} \varphi(\theta) d\sigma(\theta).
\end{equation}

Now for $\theta\in \mathbb{S}^{n-1}$,  we let $m_\theta(\xi) = \frac{|\xi\cdot\theta|^r}{|\xi|^r}$.  Using (\ref{newm}), we may write $T_{m_r}$ as a weighted average of the $T_{m_\theta}$'s. More precisely, we shall prove the following lemma. 

\begin{lemma} For all $f\in C^\infty_0(\mathbb{R}^n)$, 
\begin{equation*} 
T_{m_r}f(x) = A_{n,r}^{-1}\int_{\mathbb{S}^{n-1}}T_{m_\theta} f(x)\varphi(\theta)d\sigma(\theta), 
\end{equation*}
for almost every $x$.
\end{lemma}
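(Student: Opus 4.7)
The plan is to prove the identity by passing to the Fourier side, using Fourier inversion (which applies pointwise for $f\in C^\infty_0(\mathbb{R}^n)$ because $\widehat{f}\in\mathcal{S}(\mathbb{R}^n)$), and then invoking Fubini's theorem. First I would note that equation (\ref{newm}) says exactly that
\begin{equation*}
m_r(\xi) \;=\; A_{n,r}^{-1}\int_{\mathbb{S}^{n-1}} m_\theta(\xi)\,\varphi(\theta)\,d\sigma(\theta),
\end{equation*}
so the desired identity already holds on the Fourier side after multiplying both sides by $\widehat{f}(\xi)$. The whole task is to transfer this back to the spatial side legitimately.

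Next, since $f\in C^\infty_0(\mathbb{R}^n)$ we have $\widehat{f}\in\mathcal{S}(\mathbb{R}^n)$, and since both $m_r$ and each $m_\theta$ are bounded by $1$, the products $m_r\widehat{f}$ and $m_\theta\widehat{f}$ lie in $L^1(\mathbb{R}^n)\cap L^2(\mathbb{R}^n)$. Therefore Fourier inversion gives the absolutely convergent representations
\begin{equation*}
T_{m_r}f(x)=\int_{\mathbb{R}^n} e^{2\pi i x\cdot\xi}m_r(\xi)\widehat{f}(\xi)\,d\xi,\qquad T_{m_\theta}f(x)=\int_{\mathbb{R}^n} e^{2\pi i x\cdot\xi}m_\theta(\xi)\widehat{f}(\xi)\,d\xi
\end{equation*}
for every $x\in\mathbb{R}^n$ (not merely a.e.).

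Then I would substitute the expression for $m_r$ into the first formula and apply Fubini's theorem to interchange the integration over $\mathbb{S}^{n-1}$ with that over $\mathbb{R}^n$. The bound
\begin{equation*}
\bigl|e^{2\pi i x\cdot\xi}m_\theta(\xi)\widehat{f}(\xi)\varphi(\theta)\bigr|\;\le\;|\widehat{f}(\xi)|,
\end{equation*}
together with $\widehat{f}\in L^1(\mathbb{R}^n)$ and the finiteness of $\sigma$ on $\mathbb{S}^{n-1}$, shows that the integrand is absolutely integrable on $\mathbb{S}^{n-1}\times\mathbb{R}^n$, so Fubini applies. Interchanging yields
\begin{equation*}
T_{m_r}f(x)=A_{n,r}^{-1}\int_{\mathbb{S}^{n-1}}\varphi(\theta)\!\left(\int_{\mathbb{R}^n}e^{2\pi i x\cdot\xi}m_\theta(\xi)\widehat{f}(\xi)\,d\xi\right)d\sigma(\theta)=A_{n,r}^{-1}\int_{\mathbb{S}^{n-1}}T_{m_\theta}f(x)\varphi(\theta)\,d\sigma(\theta),
\end{equation*}
which is the claimed identity (in fact for every $x$).

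I do not expect any real obstacle. The only mildly delicate point is ensuring that $T_{m_\theta}f$ is well-defined pointwise, which is handled by the Schwartz decay of $\widehat{f}$ and the uniform bound $\|m_\theta\|_\infty\le 1$; once that is in place, Fubini gives everything immediately. Note that one could not a priori run this argument for arbitrary $f\in L^p$ since $m_\theta\widehat{f}$ need not be integrable, but restricting to $C^\infty_0(\mathbb{R}^n)$ is harmless because it is dense in every $L^p$ and the eventual bound on $T_{m_r}$ will be obtained by extension.
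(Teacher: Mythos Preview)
Your proof is correct. Both your argument and the paper's hinge on the same observation---that (\ref{newm}) already gives the identity on the Fourier side---and both invoke Fubini to pass back. The packaging differs slightly: the paper pairs both sides against an arbitrary $g\in C_0^\infty(\mathbb{R}^n)$ and uses Plancherel to move to the frequency domain, concluding equality almost everywhere by density of test functions; you instead use Fourier inversion directly, exploiting $\widehat{f}\in\mathcal{S}(\mathbb{R}^n)$ so that $m_r\widehat{f}$ and each $m_\theta\widehat{f}$ lie in $L^1$, and thereby obtain the identity for \emph{every} $x$. Your route is marginally more direct and yields a slightly stronger conclusion, while the paper's duality argument is the more standard template in harmonic analysis and would extend more readily to situations where pointwise inversion is unavailable. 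Neither approach has any gap.
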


\begin{proof} Let $f$ and $g\in C_0^\infty(\mathbb{R}^n)$. Then by Plancherel's theorem, Fubini's theorem, and the Cauchy-Schwarz inequality, 
\begin{align*}
&A_{n,r}^{-1}\int_{\mathbb{R}^n} \int_{\mathbb{S}^{n-1}}T_{m_\theta} f(x)\varphi(\theta)d\sigma(\theta)g(x)dx\\
=& A_{n,r}^{-1}\int_{\mathbb{S}^{n-1}}\varphi(\theta) \int_{\mathbb{R}^n} T_{m_\theta} f(x) g(x) dx d\sigma(\theta)\\
=& A_{n,r}^{-1}\int_{\mathbb{S}^{n-1}}\varphi(\theta) \int_{\mathbb{R}^n} m_\theta(\xi) \widehat{f}(\xi) \bar{\widehat{g}}(\xi) d\xi d\sigma(\theta)\\
=& A_{n,r}^{-1}\int_{\mathbb{R}^n}\int_{\mathbb{S}^{n-1}}  m_{\theta}(\xi)\varphi(\theta)d\sigma(\theta) \widehat{f}(\xi) \bar{\widehat{g}}(\xi) d\xi\\
=& \int_{\mathbb{R}^n} \widehat{T_{m_r} f}(\xi)\bar{\widehat{g}}(\xi) d\xi\\
=& \int_{\mathbb{R}^n} T_{m_r}f(x)g(x)dx.
\end{align*}
\end{proof}

We will also need to estimate the $L^p$ boundedness of the operators $T_{m_\theta}$.   This is accomplished by the following lemma.  

\begin{lemma} \label{Ttheta} There exist $0<C_{n}<\infty$ such that 
\begin{equation*}
\|T_{m_\theta} f\|_p \leq C_{n} (p^*-1)^{6n} \|f\|_p, 
\end{equation*}
for all $f\in L^p(\mathbb{R}^n)$. $C_{n}$ depends only on $n$ and, in particular, does not depend on $r$ or $\theta$.
\end{lemma}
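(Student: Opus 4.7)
The plan is to use rotational symmetry to reduce to the canonical case $\theta=e_1$, and then apply the Marcinkiewicz multiplier theorem to the explicit function $m_{e_1}(\xi)=|\xi_1|^r/|\xi|^r$.

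First I would reduce to $\theta=e_1$. Choose any $R_\theta\in SO(n)$ with $R_\theta e_1=\theta$. Since $\xi\cdot\theta=R_\theta^{-1}\xi\cdot e_1$ and $|R_\theta^{-1}\xi|=|\xi|$, we have $m_\theta(\xi)=m_{e_1}(R_\theta^{-1}\xi)$, and a direct Fourier-transform computation yields $(T_{m_\theta}f)\circ R_\theta = T_{m_{e_1}}(f\circ R_\theta)$. Because composition with a rotation is an $L^p$-isometry, $\|T_{m_\theta}f\|_p = \|T_{m_{e_1}}(f\circ R_\theta)\|_p \le \|T_{m_{e_1}}\|_{L^p\to L^p}\|f\|_p$, so it suffices to prove the bound with $\theta$ replaced by $e_1$.

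Next I would verify that $m_{e_1}(\xi)=(\xi_1^2/|\xi|^2)^{r/2}$ satisfies the Marcinkiewicz condition (\ref{Marc}) with constant $K=C_n$ depending only on $n$. The function is $C^\infty$ on $\{\xi_1\neq 0\}$, hence on the set required by the hypothesis, and $\|m_{e_1}\|_\infty\le 1$. The key ingredient for making the bound uniform in $r$ is the antiderivative identity
\begin{equation*}
\int\frac{u^{r-1}}{(1+u^2)^{(r+2)/2}}\,du \;=\; \frac{1}{r}\Bigl(\tfrac{u^2}{1+u^2}\Bigr)^{\!r/2}+C,
\end{equation*}
whose $1/r$ factor cancels exactly the $r$ produced by differentiating $|\xi_1|^r$. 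For instance, when $k=1$ and $i_1=1$, with $b^2=\sum_{j\ne 1}\xi_j^2$, the substitution $u=\xi_1/b$ gives
\begin{equation*}
\int_{I_l}|\partial_1 m_{e_1}(\xi)|\,d\xi_1 \;=\; 2\bigl[F(2^{l+1}/b)-F(2^l/b)\bigr] \;\le\; 2,
\end{equation*}
where $F(u)=(u^2/(1+u^2))^{r/2}\in[0,1]$. The case $i_1\ne 1$ is handled by the simpler substitution $w=a^2+\xi_{i_1}^2$ (with $a^2$ collecting the remaining fixed coordinates) and yields the same uniform bound. For higher-order mixed partial derivatives one iterates: each integration against $\xi_{i_j}\in I_{l_{i_j}}$ either uses the explicit antiderivative above (when $\partial_1$ is involved) or the substitution $w=|\xi|^2$ (otherwise), producing a $1/(r+2s)$ factor that cancels one of the $(r+2s)$ factors accumulated from differentiation. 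After all $k$ integrations, what remains is a signed sum of at most $2^k$ boundary terms, each of the form $|\xi_1|^r\,(c^2+\text{nonneg})^{-r/2}$, and each such term is bounded by $1$ because $c^2\ge\xi_1^2$ among the fixed coordinates.

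Applying Theorem \ref{Marcinkiewicz} with $K=C_n$ then gives $\|T_{m_{e_1}}f\|_p\le C_n(p^*-1)^{6n}\|f\|_p$, which combined with the reduction in the first step completes the proof. The main obstacle is the second step: carefully tracking the combinatorial $r$-factors in the higher-order mixed partials of $m_{e_1}$ so that those produced by Leibniz's rule are always exactly canceled by antiderivative factors. The case in which $\partial_1$ is mixed with $\partial_j$ for some $j\ne 1$ is the most delicate, because the $\xi_1$-integration must be ordered correctly to exhibit the required telescoping structure and expose the cancellation.
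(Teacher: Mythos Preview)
Your reduction to $\theta=e_1$ via a rotation is exactly what the paper does. The difference is in how the Marcinkiewicz hypothesis for $m_{e_1}(\xi)=|\xi_1|^r/|\xi|^r$ is verified with a constant independent of $r$. The paper never integrates: it establishes the stronger pointwise inequality
\[
\xi_{i_1}\cdots\xi_{i_k}\,\bigl|\partial_{i_1}\cdots\partial_{i_k}m_{e_1}(\xi)\bigr|\;\le\;C_n,
\]
which is homogeneous of degree zero and therefore reduces to maximizing the explicit expressions $r(r+2)\cdots(r+2k-2)\,\xi_1^{r}\xi_{i_1}^2\cdots\xi_{i_k}^2$ (and a variant when $1\in\{i_1,\dots,i_k\}$) on the unit sphere. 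Two elementary Lagrange-multiplier lemmas show these maxima are of order $(r+2k)^{-k}$, cancelling the $r$-factors from differentiation. The dyadic integral condition then follows from $\int_{I_\ell}d\xi_i/\xi_i=\log 2$.

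Your antiderivative route is a genuinely different idea and is clean when $1\notin\{i_1,\dots,i_k\}$: there $\partial_{i_1}\cdots\partial_{i_k}m_{e_1}$ has a single sign on each orthant, so the iterated $w=|\xi|^2$ substitution really does telescope to $2^k$ corner values of $m_{e_1}$, each at most $1$. The gap is in the mixed case $1\in\{i_1,\dots,i_k\}$ with $k\ge 2$. The derivative then carries the factor $r\sum_{j\ge 2}\xi_j^2-(2k-2)\xi_1^2$, which changes sign, and your ``signed sum of $2^k$ boundary terms'' bounds only the \emph{signed} iterated integral, not the integral of the absolute value that the Marcinkiewicz condition requires. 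Splitting by the triangle inequality into the two Leibniz pieces does not help: for $k=2$, after one substitution the piece $r^2\xi_1^{r-1}\xi_2/|\xi|^{r+2}$ leaves you with $\int r\xi_1^{r-1}/|\xi|^r\,d\xi_1$, whose integrand behaves like $r/\xi_1$ for $\xi_1$ large, giving a bound that grows with $r$. The cancellation between the two pieces is essential and is lost under the absolute value.

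This can be repaired---split at the single sign-change point and control $\max_{\xi_1}\bigl|\partial_{i_2}\cdots\partial_{i_k}m_{e_1}\bigr|$ uniformly in $r$---but that maximum is precisely the optimization the paper handles with Lagrange multipliers, so the paper's ingredient reappears. The pointwise route is therefore both shorter and avoids the absolute-value bookkeeping entirely.
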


Before proving lemma \ref{Ttheta}, we will first show how it is used to give a simple proof of Theorem \ref{main}.  By Minkowski's integral inequality,
\begin{align*}
\|T_{m_r}f\|_p &=A_{n,r}^{-1}\left(\int_{\mathbb{R}^n}\left|\int_{\mathbb{S}^{n-1}} \varphi(\theta) T_{m_\theta} f (x) d\sigma(\theta)\right|^pdx\right)^{1/p}\\
&\leq A_{n,r}^{-1}\int_{\mathbb{S}^{n-1}} \left(\int_{\mathbb{R}^n}|\varphi(\theta)|^p|T_{m_\theta} f(x)|^p dx\right)^{1/p} d\sigma(\theta)\\
&= A_{n,r}^{-1}\int_{\mathbb{S}^{n-1}} |\varphi(\theta)|\left(\int_{\mathbb{R}^n}|T_{m_\theta} f(x)|^p dx\right)^{1/p} d\sigma(\theta)\\
&= A_{n,r}^{-1}\int_{\mathbb{S}^{n-1}} \|T_{m_\theta} f\|_p d\sigma(\theta)\\
&\leq A_{n,r}^{-1} C_{n}(p^*-1)^{6n}\omega_{n-1}\|f\|_p,
\end{align*}
where $\omega_{n-1}$ is the surface area of $\mathbb{S}^{n-1}$.
Therefore, theorem \ref{main} is proved.
\end{proof}

We shall now prove lemma \ref{Ttheta} 
\begin{proof}
  For $\theta$ in $\mathbb{S}^{n-1}$, let $R$  be a rotation such that $R\theta = e_1$ and for $f \in L^p$ let $g(x) = f(R^{-1}x)$. Then a simple change for variables shows that $T_{m_\theta} f(x) = T_{m_{e_1}}g(R x)$. Therefore, it suffices to show that 
\begin{equation*}
\label{e1bound}\|T_{m_{e_1}}f\|_p \leq C_{n}(p^*-1)^{6n}\|f\|_p \quad \text{for all }f\in L^p(\mathbb{R}^n).
\end{equation*}

To prove this, we will show that $m_{e_1}$ satisfies the assumptions of theorem \ref{Marcinkiewicz} and that we can take $K$ to be independent of $r$ in (\ref{Marc}). Note that it follows from \cite[p. 110]{Ste2} that for each fixed $r$, $T_{m_{e_1}}$ is a Marcinkiewicz multiplier, but it takes considerably  more work to show that $K$ can be taken to be independent of $r$ in (\ref{Marc}).
%
$m_{e_1}(\xi)$ is even in each $\xi_i$ so it suffices to restrict attention to the region where all $\xi_i$ are positive. Noting that for all $A_1,\ldots,A_k>0$
\begin{equation*}
\int_{A_1}^{2A_1}\ldots\int_{A_k}^{2A_k}\frac{1}{\xi_{i_1}\xi_{i_2}\ldots\xi_{i_k}}d\xi_{i_k}\ldots d\xi_{i_1} = \log(2)^k,
\end{equation*}
we see that, it suffices to prove there exists $C$ independent of $r$ such that 

\begin{equation*}
  |\partial_{i_1}\ldots\partial_{i_k}m_{e_1}(\xi)| \leq \frac{C}{\xi_{i_1}\xi_{i_2}\ldots\xi_{i_k}}
\end{equation*}
or equivalently that
\begin{equation}
\label{homog} \xi_{i_1}\xi_{i_2}\ldots\xi_{i_k}|\partial_{i_1}\ldots\partial_{i_k}m_{e_1}(\xi)| \leq C.
\end{equation}

The left hand side of (\ref{homog}) is homogeneous of order zero, so it suffices to bound this quantity on the portion of the unit sphere where all $\xi_i\geq0$. To do this, we will make use of two elementary lemma's which involve the use of Lagrange multipliers to bound polynomials on ellipses. 

 \begin{lemma} \label{lagrange1} Let $a,b,c,d>0$. The maximum value of
\begin{equation*}
f(x,y) = x^ay^b 
\end{equation*}
subject to the constraints $cx^2+dy^2=1$, $x,y \geq 0$, is given by 
\begin{equation*}
\frac{\left(\frac{a}{c}\right)^{a/2}\left(\frac{b}{d}\right)^{b/2}}{(a+b)^{(a+b)/2}}.
\end{equation*} 
\end{lemma}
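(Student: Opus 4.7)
The plan is to prove this by the method of Lagrange multipliers, together with a quick check of the boundary. Since $a,b>0$, the function $f(x,y)=x^ay^b$ vanishes on the boundary of the feasible region (where $x=0$ or $y=0$), so the maximum must occur at an interior critical point. Moreover, $f$ is continuous on the compact arc $\{(x,y):cx^2+dy^2=1,\ x,y\geq 0\}$, so the maximum exists.

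At an interior critical point, setting $\nabla f = \lambda \nabla g$ where $g(x,y)=cx^2+dy^2-1$, I get
\begin{equation*}
a x^{a-1}y^b = 2\lambda c x, \qquad b x^a y^{b-1} = 2\lambda d y.
\end{equation*}
Dividing the first by $x$ and the second by $y$ (both nonzero in the interior), and then eliminating $\lambda$, I would obtain the relation $a\,dy^2 = b\,cx^2$, or equivalently
\begin{equation*}
dy^2 = \frac{b}{a}\,cx^2.
\end{equation*}

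Substituting into the constraint $cx^2+dy^2=1$ gives $cx^2\bigl(1+\tfrac{b}{a}\bigr)=1$, hence $x^2 = \frac{a}{c(a+b)}$ and symmetrically $y^2=\frac{b}{d(a+b)}$. Evaluating $f$ at this critical point yields
\begin{equation*}
x^a y^b = \left(\frac{a}{c(a+b)}\right)^{a/2}\!\left(\frac{b}{d(a+b)}\right)^{b/2} = \frac{(a/c)^{a/2}(b/d)^{b/2}}{(a+b)^{(a+b)/2}},
\end{equation*}
which is the claimed maximum. No step of this argument presents any serious obstacle; the only mild care is to note that $a,b>0$ forces the maximum to lie in the open first quadrant, which rules out the two boundary points on the axes.
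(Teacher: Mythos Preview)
Your proof is correct and follows exactly the same approach as the paper, which simply states that the method of Lagrange multipliers yields $x^2=\frac{a}{c(a+b)}$ and $y^2=\frac{b}{d(a+b)}$ and that the result then follows immediately. You have merely supplied the routine details (including the boundary check) that the paper omits.
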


\begin{proof} It is easy to check using the method of Lagrange multipliers to show that $f$ is maximized when 
\begin{equation*}
\label{findx} x^2 = \frac{a}{c(a+b)}\quad \text{and}\quad y^2 = \frac{b}{d(a+b)}.
\end{equation*}
The result follows immediately. 
\end{proof}

\begin{lemma} \label{lagrange2} Let $1<k\leq n$, then 
the maximum value of $f(x,y,z) = (k-1)x^{2k}y^r + (n-k)x^{2k-2}y^r z^2$ subject to the constraint that $g(x,y,z)=(k-1)x^2+y^2+(n-k)z^2 = 1, x,y,z\geq 0$  is 

\begin{equation*}
\frac{(2k)^k}{(k-1)^{k-1}}\left(\frac{r}{2k+r}\right)^{r/2}\frac{1}{(2k+r)^k}.
\end{equation*}
\end{lemma}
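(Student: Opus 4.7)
My plan is to exploit the special algebraic structure of $f$ and $g$ to collapse the three-variable Lagrange problem to a one-dimensional optimization that can be finished off by a direct application of lemma \ref{lagrange1}. The key observation is the factorization
$$f(x,y,z) = x^{2k-2}y^r\bigl[(k-1)x^2 + (n-k)z^2\bigr],$$
whose bracketed quantity equals precisely $1-y^2$ on the constraint surface $g=1$. After this substitution, $f$ loses all dependence on $z$ and equals $x^{2k-2}y^r(1-y^2)$, while the residual restriction inherited from $g=1$ together with $(n-k)z^2\geq 0$ is simply $(k-1)x^2 + y^2 \leq 1$ on the quarter-plane $x,y\geq 0$.

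Next I would optimize in $x$ for fixed $y\in[0,1]$. Since $k>1$, the factor $x^{2k-2}$ is strictly increasing in $x$, so the maximum lies on the boundary $(k-1)x^2 = 1-y^2$, i.e. $x^{2k-2}=\bigl((1-y^2)/(k-1)\bigr)^{k-1}$. Substituting back yields
$$f = \frac{(1-y^2)^k\, y^r}{(k-1)^{k-1}},$$
so it remains to maximize $(1-y^2)^k y^r$ on $[0,1]$. Setting $u=\sqrt{1-y^2}$, this becomes the problem of maximizing $u^{2k}y^r$ subject to $u^2+y^2=1$ with $u,y\geq 0$, which is handled by lemma \ref{lagrange1} with $a=2k$, $b=r$, $c=d=1$. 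That lemma gives the value
$$\frac{(2k)^k\, r^{r/2}}{(2k+r)^{(2k+r)/2}} \;=\; \frac{(2k)^k}{(2k+r)^k}\left(\frac{r}{2k+r}\right)^{r/2},$$
and dividing by $(k-1)^{k-1}$ yields exactly the claimed maximum.

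I do not anticipate any substantive obstacle. The only subtle point worth mentioning is the legitimacy of "silently" optimizing over $z$: when $n>k$, for any $(x,y)$ with $(k-1)x^2+y^2\leq 1$ one may choose $z\geq 0$ so that $g(x,y,z)=1$ holds with equality, and this matches the critical point found above, which in fact satisfies $z=0$. When $n=k$, the variable $z$ disappears from both $f$ and $g$ and the lemma reduces directly to lemma \ref{lagrange1} applied with $a=2k$, $b=r$, $c=k-1$, $d=1$, giving the same final expression after multiplication by $(k-1)$. So the same formula holds in both regimes, as asserted.
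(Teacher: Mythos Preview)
Your proof is correct and follows the same outline as the paper's: both arguments show that the maximum occurs at $z=0$ and then finish by applying lemma \ref{lagrange1}. The paper obtains $z=0$ via a Lagrange-multiplier computation (with details omitted), whereas your factorization $f=x^{2k-2}y^r\bigl[(k-1)x^2+(n-k)z^2\bigr]=x^{2k-2}y^r(1-y^2)$ on the constraint surface gives a more transparent monotonicity argument leading to the same reduction.
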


\begin{proof}
If $k=n$ then, 
\begin{equation*}
f(x,y,z) = f(x,y)= (n-1)x^{2n}y^r \quad\text{and}\quad g(x,y,z) = g(x,y) = (n-1)x^2+y^2,
\end{equation*} 
so the result follows from lemma \ref{lagrange1}. If $1<k<n$, the method of Lagrange multipliers can be used to show that  at any point at which $f$ achieves a local maximum, $z=0$. Therefore, the result again follows from lemma \ref{lagrange1}.
\end{proof}
Now, in order to verify that $m_r$ satisfies (\ref{homog}), we consider three cases.

\begin{case}\label{nospecialk}
$1\notin \{i_1,\ldots,i_k\}:$
\end{case}
 By direct computation,
\begin{align*}
|\partial_{i_1}\ldots\partial_{i_k}m_{e_1}(\xi)| &= r(r+2)\ldots(r+2k-2)\frac{\xi_1^{r}\xi_{i_1}\ldots\xi_{i_k}}{|\xi|^{r+2k}}.
\end{align*}
Therefore, we need to bound 
\begin{equation*}
 r(r+2)\ldots(r+2k-2)\xi_1^{r}\xi^2_{i_1}\ldots\xi^2_{i_k}
\end{equation*} 
on the portion of the unit sphere where all coordinates are non-negative. By symmetry, it is clear that this last term is maximized when $\xi_{i_1}=\xi_{i_2}=\ldots=\xi_{i_k}$ and $\xi_i=0$ , whenever $i\notin\{i_1,\ldots,i_k,1\}$. Therefore, we are  lead to the two-dimensional optimization problem of maximizing 
\begin{equation*}
f(x,y) = x^{2k}y^r,
\end{equation*}
subject to the constraint that $g(x,y) = kx^2+y^2=1$. By lemma \ref{lagrange1}, the maximal value of $f$ subject to this constraint is less than 
\begin{equation*}
 C_k \left(\frac{1}{2k+r}\right)^k.
\end{equation*}
Therefore, on the unit sphere
\begin{equation*}
r(r+2)\ldots(r+2k-2)\xi_j^{r}\xi^2_{i_1}\ldots\xi^2_{i_k} \leq C_k \frac{r(r+2)\ldots(r+2k-2)}{(2k+r)^k} \leq C_k.
\end{equation*}

\begin{case} \label{onlyspecialk}
$k=1, i_1=1:$ 
\end{case} Differentiating, we see
\begin{align*}
|\xi_1\partial_1 m_{e_1}(\xi)| &= r\frac{\xi_1^r}{|\xi|^{r+2}}(\xi_2^2+\ldots+\xi_n^2), 
\end{align*}
and (\ref{homog}) can be verified by repeating the arguments of case \ref{nospecialk}.

\begin{case}  $k>1$ and $1\in \{i_1,\ldots,i_k\}:$ 
\end{case}
 Without loss of generality, we may assume $i_k=1$. Carrying out the computations, we see
\begin{align*}
&|\partial_{i_1}\ldots\partial_{i_{k-1}}\partial_{1} m(\xi)| \\
=&\Bigg{|}\frac{r(r+2)\ldots(r+2k-4)r\xi_{i_1}\ldots\xi_{i_{k-1}}\xi_{1}^{r-1}}{|\xi|^{r+2k-2}}
-  \frac{r(r+2)\ldots(r+2k-2)\xi_{i_1}\ldots\xi_{i_{k-1}}\xi_{1}^{r+1}}{|\xi|^{r+2k}}\Bigg{|}\\
=&\frac{r(r+2)\ldots(r+2k-4)\xi_{i_1}\ldots\xi_{i_{k-1}}\xi_{1}^{r-1}}{|\xi|^{r+2k}}\left|r(\xi_2^2+\xi_{3}^2+\ldots+\xi_n^2)-(2k-2)\xi_1^2\right|.
\end{align*}
Therefore, it suffices to show that there exists $C_k$ such that
\begin{equation*}
r(r+2)\ldots(r+2k-4)\xi^2_{i_1}\ldots\xi^2_{i_{k-1}}\xi_1^{r+2} <C_k 
\end{equation*}
and 
\begin{equation*}
r(r+2)\ldots(r+2k-4)r\xi^2_{i_1}\ldots\xi^2_{i_{k-1}}\xi_1^{r}(\xi_2^2+\ldots+\xi_n^2) <C_k,
\end{equation*}
whenever $|\xi|=1$ and all $\xi_{i}\geq 0$.
This can be done by using lemmas \ref{lagrange1} and \ref{lagrange2} in a manner similar to  cases \ref{nospecialk} and \ref{onlyspecialk}.
\end{proof}

\begin{remark}
In the case that $r=2k$ is an even integer, we have that $T_{e_1} =R_1^{2k}$, the $2k-th$ order Riesz transform in direction 1. Dimension free estimates for this operator were obtained by Iwaniec and Martin in \cite{IwaMar} using a method that compared polynomials of the Riesz transforms to polynomials of the complex Riesz transforms and then in turn estimated the complex Riesz transforms by comparing them to the iterated Beurling-Ahlfors transform.

  Identifying $\mathbb{C}^n$ with $\mathbb{R}^{2n}$ the complex Riesz transforms are defined by  
\begin{equation*}
C_j = R_j + i R_{n+j}
\end{equation*}
for $1\leq j\leq n$. For a polynomial $p(x) = \sum_{|\beta|\leq m} c_\beta x^{\beta}$, $p(\mathbf{R})$ and $p(\mathbf{C})$ are defined by 
\begin{equation*}
p(\mathbf{R}) = \sum_{|\beta|\leq m} c_\beta \mathbf{R}^{\beta}
\quad
\text{ and }\quad
p(\mathbf{C}) = \sum_{|\beta|\leq m} c_\beta \mathbf{C}^{\beta},
\end{equation*}
\noindent where $\mathbf{R}^\beta= R_1^{\beta_1}\circ\ldots \circ R_n^{\beta_n}$ and $\mathbf{C}^\beta= C_1^{\beta_1}\circ\ldots \circ C_n^{\beta_n}$. Iwaniec and Martin then show that if $p_{2k}$ is a homogeneous polynomial of degree $2k$ we have that 

\begin{equation*}
\|p_{2k}(\mathbf{R})\|_{L^p(\mathbb{R}^n)\rightarrow L^p(\mathbb{R}^n)} \leq \|p_{2k}(\mathbf{C})\|_{L^p(\mathbb{C}^n)\rightarrow L^p(\mathbb{C}^n)} \leq \frac{2\Gamma(n+k)\|B^{k}\|_p}{k\pi^n\Gamma(k)} \int_{\mathbb{S}^{2n-1}}|p_{2k}(z)| d\sigma(z),
\end{equation*}
where $\|B^{k}\|_p$ is the norm of the $k$-th iterated Beurling-Ahlfors transform on $L^p(\mathbb{C}).$

\noindent Picking $p(x)=x_1^{2k}$ and computing the integral on the right-hand side using the formulas in Appendix D of \cite{Gra}, we see 
\begin{equation*}
\label{RandB}\|R_1^{2k}\|_{L^p(\mathbb{R}^n)\rightarrow L^p(\mathbb{R}^n)} \leq \|B^k\|_p.
\end{equation*}
 The $L^p$ boundedness of $B^k$ was studied by Dragicevic, Petermichl, and Volberg in \cite{Dra2} where they showed that 
\begin{equation*}
C_1 k^{1-2/p^*}p^* \leq \|B^k\|_{p} \leq C_2k^{1-2/p^*}p^*.
\end{equation*}
Combining this with (\ref{RandB}) gives
\begin{equation*}
\|R_1^{2k}\|_{L^p(\mathbb{R}^n)\rightarrow L^p(\mathbb{R}^n)} \leq C_2k^{1-2/p^*}p^*.
\end{equation*}  
Therefore, 
\begin{equation*}
\|T_{m_r}f\|_p \leq C_n\frac{\Gamma(\frac{n+r}{2})}{\Gamma(\frac{n+1}{2})}\left(\frac{r}{2}\right)^{1-2/p^*}p^*\|f\|_p.
\end{equation*}
Like the bound obtained  in theorem \ref{second}, this bound is linear in $p$. Futhermore, with $p$ fixed it has order $r^{(n+1)/2-2/p^*}$ as $r\rightarrow \infty$, which is slightly better than the bound obtained in theorem \ref{second}. However, this bound has the disadvantage of only being valid when $r$ is an even integer whereas the bound obtained in theorem \ref{second} is valid for all sufficiently large $r$.
\end{remark}

\section{The proof of theorem \ref{second}}
\begin{proof}
It is clear that $\|m_r\|_\infty \leq 1$, so
by  (\ref{Hormander}) it suffices to show that 
\begin{equation*}
\left(\sup_{R>0} R^{-n+2|\beta|}\int_{R<|\xi|<2R} |\partial^{\beta}m_r(\xi)|^2 d\xi\right)^{1/2} \leq C_n r^{|\beta|} 
\end{equation*}
for all multi-indexes with $|\beta|\leq n_0$. 
But since $m_r$ is homogeneous of order zero, we can make a change of variables and then use polar coordinates to see that
\begin{align*}
\sup_{R>0} R^{-n+2|\beta|}\int_{R<|\xi|<2R} |\partial^{\beta}m_r(\xi)|^2 d\xi &= \int_{1<|\xi|<2} |\partial^{\beta}m_r(\xi)|^2 d\xi \\
&= \int_1^2 t^{n-1} \int_{\mathbb{S}^{n-1}}|\partial^\beta m_r(t\xi')|^2 d\sigma(\xi) dt\\
&= \int_1^2 t^{n-1-2|\beta|} dt  \int_{\mathbb{S}^{n-1}}|\partial^\beta m_r(\xi')|^2 d\sigma(\xi) \\
&\leq C_n \int_{\mathbb{S}^{n-1}}|\partial^\beta m_r(\xi)|^2 d\sigma(\xi),
\end{align*}
where $\xi'=\frac{\xi}{|\xi|}$.
Therefore, it suffices to  show that for all multi-indexes $\beta$ with $|\beta|\leq n_0$, 
\begin{equation}
\label{Hormpartials}\left(\int_{\mathbb{S}^{n-1}}|\partial^\beta m_r(\xi)|^2 d\sigma(\xi)\right)^{1/2} \leq C_n  r^{|\beta|}.
\end{equation}

\noindent As in (\ref{Grafak}), we see that 
\begin{equation*}
m_r(\xi) = C_n\frac{\Gamma(\frac{r+n}{2})}{\Gamma(\frac{r+1}{2})}n_r(\xi), 
\end{equation*}
where
\begin{equation*}
n_r(\xi)= \int_{\mathbb{S}^{n-1}}\frac{|\xi\cdot\theta|^r}{|\xi|^r} \varphi(\theta) d\sigma(\theta).
\end{equation*}
\noindent We will show that 

\begin{equation*}
\left(\int_{\mathbb{S}^{n-1}}|\partial^\beta n_r(\xi)|^2 d\sigma(\xi)\right)^{1/2} \leq C_n  r^{|\beta|} \frac{\Gamma(\frac{r-n_0+1}{2})}{\Gamma(\frac{r-n_0+n}{2})}, 
\end{equation*} 
\noindent and so (\ref{Hormpartials}) will follow by observing that Sterling's formula implies that there exists $C_n$ such that for all $r\geq n_0$

\begin{equation*}
\frac{\Gamma(\frac{r+n}{2})}{\Gamma(\frac{r+1}{2})}\frac{\Gamma(\frac{r-n_0+1}{2})}{\Gamma(\frac{r-n_0+n}{2})} \leq C_n.
\end{equation*} 

For all $\theta\in \mathbb{S}^{n-1}$, let $m_\theta(\xi) = \frac{|\xi\cdot\theta|^r}{|\xi|^r}$ so that
\begin{equation*}
\partial^\beta n_r(\xi)  = \int_{\mathbb{S}^{n-1}} \partial^\beta m_\theta(\xi)\varphi(\theta) d\sigma(\theta).
\end{equation*}

\noindent We note that it suffices to show that for all $|\beta|\leq n_0$,
\begin{equation}
\label{partial} |\partial^\beta m_\theta(\xi)| \leq C_n r^{|\beta|} |\xi\cdot\theta|^{r-n_0}.
\end{equation}
For then we see that 
\begin{align*}
\left(\int_{\mathbb{S}^{n-1}}|\partial^\beta n_r(\xi)|^2 d\sigma(\xi)\right)^{1/2} &\leq C_n r^{|\beta|}\left(\int_{\mathbb{S}^{n-1}}\left(\int_{\mathbb{S}^{n-1}}|\partial^\beta m_\theta(\xi)|d\sigma(\theta)\right)^2d\sigma(\xi)\right)^{1/2}\\
&\leq  C_n r^{|\beta|}\left(\int_{\mathbb{S}^{n-1}}\left(\int_{\mathbb{S}^{n-1}}|\xi\cdot\theta|^{r-n_0}d\sigma(\theta)\right)^2d\sigma(\xi)\right)^{1/2}\\
&=C_n r^{|\beta|}\frac{\Gamma(\frac{r-n_0+1}{2})}{\Gamma(\frac{r-n_0+r}{2})}.
\end{align*}

Let $g_\theta(\xi) = |\xi\cdot\theta|^r$ and $h(\xi)=|\xi|^{-r}$ so that $m_\theta(\xi) = g_\theta(\xi)h(\xi)$. By Leibniz's rule 
\begin{align*}
|\partial^\beta m_\theta(\xi)| &= \left| \sum_{\gamma\leq \beta} {\beta \choose \gamma} \partial^\gamma g_\theta(\xi)\partial^\delta h(\xi) \right|\\
&\leq C_n \sum_{\gamma\leq\beta} |\partial^\gamma g_\theta(\xi)||\partial^\delta h(\xi)|,
\end{align*}
where $\delta = \beta-\gamma$.

Letting $\gamma = (\gamma_1,\ldots,\gamma_i)$ and $\delta=(\delta_1,\ldots,\delta_j)$, we see that when $|\theta|=|\xi|=1$ 
\begin{align}
|\partial^\gamma g_\theta(\xi)| &= r(r-1)\ldots (r-i+1)|\xi \cdot \theta|^{(r-i)} \left|\theta_{\gamma_1}\ldots\theta_{\gamma_i}\right| \nonumber\\
\label{gbound}&\leq r^i |\xi\cdot\theta|^{r-n_0}
\end{align}
and
\begin{align}
\label{hbound}|\partial_\delta h(\xi)| = r(r+1)\ldots (r+j-1)|\xi|^{-r-2j}\left|\xi_{\delta_1}\ldots\xi_{\delta_j}\right| \leq C_n r^j.
\end{align}
(\ref{partial}) follows immediately which completes the proof. 
\end{proof}

\begin{remark} \label{McCo}
If we inspect the proof of theorem \ref{second}, we will see that if $r>n+1$, it follows from (\ref{gbound}) and (\ref{hbound}), that $m_r$ is multiplier which satisfies the estimate 
\begin{equation}
|\xi|^{|\beta|} |\partial^\beta m_r(\xi)| \leq C_r
\end{equation}
for all multi-indexes with $|\beta|\leq n+1$. Therefore, by a result of McConnell \cite{McC}, $m_r$ may be obtained using martingale transforms with respect to a Cauchy process.
\end{remark}

\section{The Method of Rotation for other L\'evy Multipliers}

We have seen that the L\'evy multipliers which arise from martingale transforms with respect to $\alpha$-stable processes can be studied analytically using the method of rotations. This approach has the disadvantage that it does not allow us to obtain as good of constants as those that are obtained through probabilistic methods. However, it has the advantage of allowing us to remove the restriction that $\alpha<2$ and thereby obtain a larger class of operators which are bounded on $L^p(\mathbb{R}^n)$ for $1<p<\infty$. It is natural to wonder if this method  can be applied to study the multipliers which arise from other L\'evy processes and if so will it again let us remove restrictions on any relevant parameters.

 Let $(X_t)_{t\geq 0}$ be a L\'evy process whose L\'evy measure $\nu$ is rotationally-symmetric and absolutely continuous with respect to the Lebesgue measure.  Write $\nu$ in polar coordinates as $d\nu = v(r)drd\sigma(\theta)$ for some function $v(r)$. Let $\varphi$ be a bounded function on $\mathbb{R}^n$ that is homogeneous of order zero,  and consider the multiplier given by 

\begin{equation*}
\label{jumpmult}m_\nu(\xi) = \frac{\int_{\mathbb{R}^n} (\cos(\xi\cdot z)-1)\varphi(z)d\nu(z)}{\int_{\mathbb{R}^n} (\cos(\xi\cdot z)-1)d\nu(z)}.
\end{equation*}
Let $\rho(\xi)$ be the L\'evy exponent corresponding to the L\'evy triple $(0,0,\nu)$. Since the $\nu$ is symmetric, $\rho(\xi)$ is real, and therefore
 \begin{equation}
\label{real}\int_{\mathbb{R}^n} (\cos(\xi\cdot z)-1)d\nu(z)=\rho(\xi).
\end{equation}
To examine the numerator define $L:\mathbb{R}\rightarrow\mathbb{R}$ by 
\begin{equation}
\label{L}L(x)=\int_0^\infty (\cos(rx)-1) v(r)dr.
\end{equation}
Then, we have that
\begin{equation}
\label{Lrep} \int_{\mathbb{R}^n} (\cos(\xi\cdot z)-1)\varphi(z)d\nu(z) = \int_{\mathbb{S}^{n-1}} L(\xi\cdot\theta)\varphi(\theta)d\sigma(\theta).
\end{equation} 
Therefore, combining (\ref{real}) and (\ref{Lrep}) we see that the multiplier which arises as the projection of martingale transforms with respect to $X_t$ is given by
\begin{equation*} 
m_\nu(\xi) = \int_{\mathbb{S}^{n-1}} \frac{L(\xi\cdot\theta)}{\rho(\xi)} \varphi(\theta) d\sigma(\theta).
\end{equation*}
Similarly to section \ref{proof}, we set $m_\theta(\xi) = \frac{L(\xi\cdot \theta)}{\rho(\xi)}$ so that 
\begin{equation*}
m_{\nu}(\xi) = \int_{\mathbb{S}^{n-1}} m_\theta(\xi) \varphi(\theta)d\sigma(\theta).
 \end{equation*} 
 Then repeating the arguments of section \ref{proof}, we see that if $T_{m_{e_1}}$ is bounded on $L^p(\mathbb{R}^n)$, then $T_{m_r}$ is bound on $L^p(\mathbb{R}^n)$.
 
More generally, we have the following corollary.

\begin{corollary}\label{generalL} For any function $L:\mathbb{R}\rightarrow \mathbb{R}$, let $AL(\xi)=\int_{\mathbb{S}^{n-1}} L(\xi\cdot\theta) d\sigma(\theta)$. If 
$m_{e_1}(\xi) = \frac{L(\xi_1)}{AL(\xi)}$ is an $L^p$ multiplier for some $1<p<\infty$, then for all $\varphi\in L^\infty(\mathbb{S}^{n-1})$.
\begin{equation*}
m_L(\xi) = \frac{\int_{\mathbb{S}^{n-1}}L(\xi\cdot\theta)\varphi(\theta)d\sigma(\theta)}{\int_{\mathbb{S}^{n-1}}L(\xi\cdot\theta)d\sigma(\theta)}.
\end{equation*}
is also an $L^p$ multiplier. In particular, if for some $C_{n,p}>0$,
\begin{equation*}
\|T_{m_{e_1}}f\|_p \leq C_{n,p}\|f\|_p \quad \text{for all }f\in L^p,
 \end{equation*} 
 then 
 \begin{equation*}
 \|T_{m_L}f\|_p \leq \omega_{n-1}C_{n,p}\|f\|_p \quad \text{for all }f\in L^p.
 \end{equation*}
\end{corollary}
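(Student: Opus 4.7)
The plan is to mirror the method of rotations used in the proof of Theorem \ref{main}, but now with the function $L$ playing the role previously played by $t\mapsto|t|^r$. The key structural fact is that $AL(\xi)$ is rotation invariant: for any orthogonal matrix $R$,
\begin{equation*}
AL(R\xi) = \int_{\mathbb{S}^{n-1}} L(R\xi\cdot\theta)\,d\sigma(\theta) = \int_{\mathbb{S}^{n-1}} L(\xi\cdot R^T\theta)\,d\sigma(\theta) = AL(\xi),
\end{equation*}
by the invariance of surface measure on $\mathbb{S}^{n-1}$ under rotations. Since the denominator $AL(\xi)$ is rotation invariant, the entire multiplier $m_\theta(\xi)=L(\xi\cdot\theta)/AL(\xi)$ transforms covariantly under the rotation group.

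First I would pick, for each $\theta\in \mathbb{S}^{n-1}$, a rotation $R_\theta$ with $R_\theta\theta=e_1$. For $f\in L^p(\mathbb{R}^n)$ set $g(x)=f(R_\theta^{-1}x)$; a direct change of variables in the Fourier representation, combined with the rotation invariance of $AL$, gives $T_{m_\theta}f(x) = (T_{m_{e_1}}g)(R_\theta x)$. In particular $\|T_{m_\theta}f\|_p = \|T_{m_{e_1}}g\|_p \leq C_{n,p}\|g\|_p = C_{n,p}\|f\|_p$, uniformly in $\theta$.

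Next I would establish the rotation-method decomposition
\begin{equation*}
T_{m_L}f(x) = \int_{\mathbb{S}^{n-1}} T_{m_\theta}f(x)\,\varphi(\theta)\,d\sigma(\theta),
\end{equation*}
for $f\in C_0^\infty(\mathbb{R}^n)$, by pairing against a test function $g\in C_0^\infty(\mathbb{R}^n)$ and applying Plancherel's theorem and Fubini's theorem exactly as in the lemma in Section \ref{proof}; the only input needed is the pointwise identity $m_L(\xi)=\int_{\mathbb{S}^{n-1}}m_\theta(\xi)\varphi(\theta)\,d\sigma(\theta)$, which is immediate from the definitions.

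Finally, Minkowski's integral inequality together with the uniform bound on $\|T_{m_\theta}f\|_p$ yields
\begin{equation*}
\|T_{m_L}f\|_p \leq \int_{\mathbb{S}^{n-1}} \|T_{m_\theta}f\|_p\,|\varphi(\theta)|\,d\sigma(\theta) \leq \omega_{n-1}\, C_{n,p}\,\|\varphi\|_\infty\|f\|_p,
\end{equation*}
and a density argument extends the bound from $C_0^\infty(\mathbb{R}^n)$ to all of $L^p(\mathbb{R}^n)$. There is no serious obstacle here; the only subtlety is checking the rotation invariance of the denominator, which is what makes all the $T_{m_\theta}$'s into $L^p$ bounded operators with a common norm bound. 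Once that observation is in place, the corollary is a clean repackaging of the argument already developed for Theorem \ref{main}.
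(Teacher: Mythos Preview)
Your proposal is correct and follows exactly the approach indicated in the paper, which simply states that the corollary follows by ``repeating the arguments of section~\ref{proof}.'' You have spelled out those details accurately, including the key observation that the denominator $AL(\xi)$ is rotation invariant (which generalizes the explicit formula~(\ref{Grafak}) used in the proof of Theorem~\ref{main}); this is precisely what allows the reduction of each $T_{m_\theta}$ to $T_{m_{e_1}}$ and hence the uniform bound.
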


Consider now, for $0<\beta<\alpha<2$, the so-called ``mixed-stable'' process defined by, $Z_t = X_t + a Y_t$ where $X_t$ is a rotationally-symmetric $\alpha$-stable process, $Y_t$ is an independent rotationally symmetric $\beta$-stable process, and $a>0$. $Z_t$ is a L\'evy process with exponent $\rho(\xi)= -(|\xi|^\alpha+a^\beta|\xi|^\beta)$ and L\'evy measure
\begin{equation*}
d\nu(z) = (C_{n,\alpha}r^{-1-\alpha}+C_{n,\beta}a^\beta r^{-1-\beta})drd\sigma(\theta).
\end{equation*} 
In this case, by an argument similar to the $\alpha$-stable case, the corresponding multiplier  
is given by

\begin{equation*}
m_{\alpha,\beta}(\xi) = \frac{\int_{\mathbb{S}^{n-1}} (C_{n,\alpha}|\xi\cdot\theta|^\alpha + C_{n,\beta,a}|\xi\cdot\theta|^\beta) \varphi(\theta)d\sigma(\theta)}{\int_{\mathbb{S}^{n-1}} (C_{n,\alpha}|\xi\cdot\theta|^\alpha + C_{n,\beta,a}|\xi\cdot\theta|^\beta) d\sigma(\theta)}.
\end{equation*}
It is already known that $m_{\alpha,\beta}$ is an $L^p$ multiplier for $1<p<\infty$ by the results of \cite{BanBog1} and \cite{BanBog2}. However, the method of rotations allows us to to remove the restriction that $0<\beta<\alpha<2.$ More precisely, we can prove the following.

\begin{corollary} Let $0<r<s<\infty$, let $C_r,C_s>0$, and let $\varphi\in L^\infty(\mathbb{R}^n)$. Then $m_{r,s}$ defined by 
\begin{equation*}
m_{r,s}(\xi) = \frac{\int_{\mathbb{S}^{n-1}} (C_r|\xi\cdot\theta|^r + C_s|\xi\cdot\theta|^s) \varphi(\theta)d\sigma(\theta)}{\int_{\mathbb{S}^{n-1}} (C_r|\xi\cdot\theta|^r+ C_s|\xi\cdot\theta|^s)d\sigma(\theta) }.
\end{equation*}
is an $L^p$ multiplier, for all $1<p<\infty$ and 
\begin{equation*}
\|T_{m_{r,s}} f\|_p \leq C_{n,r,s} (p^*-1)^{6n} \|f\|_p \quad \text{for all } f\in L^p(\mathbb{R}^n).
\end{equation*}
\end{corollary}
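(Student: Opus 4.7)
The approach is to apply Corollary~\ref{generalL} with $L(x) = C_r|x|^r + C_s|x|^s$. By (\ref{Grafak}), $AL(\xi) = C_r A_{n,r}|\xi|^r + C_s A_{n,s}|\xi|^s$, so it suffices to show that
\begin{equation*}
m_{e_1}(\xi) = \frac{C_r|\xi_1|^r + C_s|\xi_1|^s}{C_r A_{n,r}|\xi|^r + C_s A_{n,s}|\xi|^s}
\end{equation*}
is an $L^p$ multiplier with norm at most $C_{n,r,s}(p^*-1)^{6n}$. Unlike the setting of Theorem~\ref{main}, this symbol is no longer homogeneous of degree zero when $r \neq s$, so the reduction to the unit sphere used in the proof of Lemma~\ref{Ttheta} does not apply directly.

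The plan for $m_{e_1}$ is to decompose
\begin{equation*}
m_{e_1}(\xi) = w_r(\xi)\,u_r(\xi) + w_s(\xi)\,u_s(\xi),
\end{equation*}
where $u_\alpha(\xi) = |\xi_1|^\alpha/(A_{n,\alpha}|\xi|^\alpha)$ is precisely the homogeneous-of-degree-zero symbol (with $\theta = e_1$) analyzed in the proof of Lemma~\ref{Ttheta}, and $w_r(\xi) = C_r A_{n,r}|\xi|^r/(C_r A_{n,r}|\xi|^r + C_s A_{n,s}|\xi|^s)$, $w_s = 1 - w_r$, are bounded radial weights summing to one. I would then verify the Marcinkiewicz condition of Theorem~\ref{Marcinkiewicz} for $m_{e_1}$ by combining (i) the bound $|\xi_{i_1}\cdots\xi_{i_\ell}|\,|\partial_{i_1}\cdots\partial_{i_\ell}u_\alpha(\xi)| \leq C_n$ uniformly in $\alpha > 0$, which is the content of the three cases treated in the proof of Lemma~\ref{Ttheta}, with (ii) a radial estimate $|\xi|^{|I|}\,|\partial^I w_r(\xi)| \leq C_{|I|,r,s}$, obtained by writing $w_r(\xi) = h(|\xi|)$ with $h(t) = a/(a + bt^{s-r})$ and checking that $|t^k h^{(k)}(t)|$ is uniformly bounded on $(0,\infty)$. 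Leibniz's rule then yields, for each $I \subseteq \{1,\ldots,k\}$,
\begin{equation*}
|\xi_{i_1}\cdots\xi_{i_k}|\,|\partial^I w_r(\xi)|\,|\partial^{I^c}u_r(\xi)| \leq \bigl(|\xi|^{|I|}|\partial^I w_r(\xi)|\bigr)\Bigl(\prod_{j\in I^c}|\xi_{i_j}|\cdot|\partial^{I^c}u_r(\xi)|\Bigr) \leq C_{n,r,s},
\end{equation*}
using $\prod_{j\in I}|\xi_{i_j}| \leq |\xi|^{|I|}$; summing the $2^k \leq 2^n$ subsets $I$ and adding the analogous contribution from $w_s u_s$ yields the Marcinkiewicz condition for $m_{e_1}$. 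Theorem~\ref{Marcinkiewicz} then produces the desired bound on $T_{m_{e_1}}$, and Corollary~\ref{generalL} finishes the proof.

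The main technical point is the uniform scaled-derivative bound $|t^k h^{(k)}(t)| \leq C_{k,r,s}$ for the radial profile $h(t) = a/(a + bt^{s-r})$. Substituting $u = (b/a)\,t^{s-r}$, each $t^k h^{(k)}(t)$ becomes a rational function of $u$ whose limits at $u=0$ and $u=\infty$ both vanish, hence it is bounded on $[0,\infty)$; this is the only new ingredient beyond the machinery already developed for Theorem~\ref{main}. An alternative route would be to write $T_{m_{r,s}} = T_{w_r}T_{m_r} + T_{w_s}T_{m_s}$, bound $T_{m_r},T_{m_s}$ by Theorem~\ref{main}, and estimate the radial $T_{w_r},T_{w_s}$ by H\"ormander--Mikhlin; however, this yields an extra factor of $(p^*-1)$ and only produces a bound of order $(p^*-1)^{6n+1}$, which is weaker than what is claimed.
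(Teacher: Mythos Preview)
Your argument is correct and follows the same overall architecture as the paper: reduce via Corollary~\ref{generalL} to showing that $m_{e_1}$ satisfies the pointwise Marcinkiewicz condition~(\ref{homog}), and then handle the loss of homogeneity by an algebraic decomposition combined with Leibniz's rule. The decomposition itself, however, is different. The paper \emph{factors} $m_{e_1}$ as a product,
\[
m_{e_1}(\xi)=\frac{|\xi_1|^r}{|\xi|^r}\cdot\frac{1+a|\xi_1|^{s-r}}{b+c|\xi|^{s-r}},
\]
appeals to Lemma~\ref{Ttheta} for the first factor, and then shows that the second (non-radial) factor $n(\xi)=(1+a|\xi_1|^t)/(b+c|\xi|^t)$ satisfies~(\ref{homog}) by applying Fa\`a di Bruno to $1/(b+c|\xi|^t)$. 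You instead write $m_{e_1}$ as a \emph{sum} $w_r u_r+w_s u_s$ of two pieces, each a homogeneous symbol $u_\alpha=|\xi_1|^\alpha/(A_{n,\alpha}|\xi|^\alpha)$ (handled directly by Lemma~\ref{Ttheta}) multiplied by a purely radial weight $w_\alpha(\xi)=h_\alpha(|\xi|)$. This has the mild advantage that the auxiliary factors are radial, so the scaled-derivative estimate reduces cleanly to the one-variable bound $\sup_{t>0}|t^k h^{(k)}(t)|<\infty$ for $h(t)=a/(a+bt^{s-r})$, whereas the paper's factor $n(\xi)$ mixes $\xi_1$ and $|\xi|$. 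Both routes require essentially the same Fa\`a di Bruno computation and yield the same constant $C_{n,r,s}(p^*-1)^{6n}$. One small remark: your assertion that ``the limits at $u=0$ and $u=\infty$ both vanish'' is literally true only for $k\ge 1$ (for $k=0$ one has $h\to 1$ at $u=0$), but of course boundedness is all that is needed and that is immediate from continuity on the compactified half-line.
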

\begin{proof}
As in the proof of theorem \ref{main}, the integral in the denominator can be computed directly and 
\begin{equation*}
\int_{\mathbb{S}^{n-1}} (C_r|\xi\cdot\theta|^r+ C_s|\xi\cdot\theta|^s)d\sigma(\theta) = C'_r|\xi|^r + C'_s|\xi|^s.
\end{equation*}
Therefore, in light of corollary \ref{generalL} it suffices to show that 
\begin{equation*}
m_{e_1}(\xi) = \frac{C_r|\xi_1|^r+C_s|\xi_1|^s}{C'_r|\xi|^r+C'_s|\xi|^s}
\end{equation*}
is an Marcinkiewicz multiplier. As in the proof of lemma \ref{Ttheta}, we restrict attention to the region where all $\xi_i$ are non-negative, and check that $m_{e_1}$ satisfies (\ref{homog}).  We already know that $\frac{|\xi_1|^r}{|\xi|^r}$ satisfies (\ref{homog}) so it suffices to show that 
\begin{equation*}
n(\xi) = \frac{1+a|\xi_1|^t}{b+c|\xi|^t}
\end{equation*}
satisfies (\ref{homog}) for all $a,b,c,t>0$ since it is easy to check using Leibniz's rule that the product of two multipliers which satisfy (\ref{homog}) is again a multiplier satisfying (\ref{homog}). 




Applying Fa\'a di Bruno's formula to the function $g(h(\xi))$, where $h(\xi) =|\xi|^2$ and $g(x) = \frac{1}{b+cx^{t/2}}$, we see that $\partial_{i_1}\ldots\partial_{i_k} \frac{1}{b+c|\xi|^t}$ is a finite linear combination of terms of the form
\begin{equation*}
\left(\frac{|\xi|^t}{b+c|\xi|^t}\right)^i \frac{\xi_{i_1}\ldots\xi_{i_k}}{|\xi|^{2k}}\frac{1}{b+c|\xi|^t}, \quad 0\leq i \leq k.
\end{equation*}
(\ref{homog}) then follows easily which completes the proof.
\end{proof}

Another example of a L\'evy multipliers which can be studied using the method of rotations arises from the so-called relativistic $\alpha$-stable process.
For $0<\alpha<2$, $M>0$, there exists a L\'evy process, $(X_t)_{t\geq 0}$ with symbol $\rho(\xi) = (|\xi|^2+M^{2/\alpha})^{\alpha/2} - M$ and infinitesimal generator
\begin{equation*}
M-(-\Delta +M^{2/\alpha})^{\alpha/2}.
\end{equation*}
When $\alpha=1$, this operator reduces to free-relativistic Hamiltonian which has been intensely studied because of its applications to relativistic quantum mechanics. For further background information on this process, we refer the reader to \cite{Chen}, \cite{BanYol}, and the references provided in therein. 

Here we will show that the multipliers which arise from taking the projections of martingale transforms with respect to $X_t$ can be studied using the method of rotations. Unfortunately, unlike in the case of the mixed stable processes, the fact that $0<\alpha<2$ will play a crucial role in the proof. Therefore, we will not be able to remove that restriction and obtain a larger class of operators.

\begin{corollary}\label{relat}
Let $0<\alpha<2$, $M>0$, and $\varphi\in L^\infty(\mathbb{R}^{n})$ homogeneous of order zero. Let $d\nu(z)=r^{-1-\alpha}\phi(r)drd\theta$ be the L\'evy measure corresponding to the relativistic $\alpha$-stable process with mass $M$ and let $L$ be defined as in (\ref{L}). Then $\frac{L(\xi_1)}{\rho(\xi)}$ is a Marcinkiewicz multiplier and therefore, by corollary \ref{generalL}  
\begin{equation*}
m_\nu=
\frac{\int_{\mathbb{R}^{n}} (1-\cos(\xi\cdot\theta))\varphi(\theta)d\nu(z)}{\int_{\mathbb{R}^{n}} (1-\cos(\xi\cdot\theta))d\nu(z)}
\end{equation*}
is an $L^p$ multiplier and 
\begin{equation*}
\|T_{m_\nu}f\|_p\leq C_{n,\alpha}(p^*-1)^{6n}\|f\|_p.
\end{equation*}
\end{corollary}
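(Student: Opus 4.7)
The plan is to appeal to Corollary \ref{generalL}: it suffices to show that $m_{e_1}(\xi)=L(\xi_1)/\rho(\xi)$ is a Marcinkiewicz multiplier, where $\rho(\xi)=(|\xi|^2+M^{2/\alpha})^{\alpha/2}-M$ and $L$ is defined by (\ref{L}). Following the template of the mixed-stable corollary, I would verify (\ref{homog}) by separating the analysis into two scaling-type estimates on the factors $L(\xi_1)$ and $1/\rho(\xi)$ and combining them via Leibniz, together with the easy boundedness $|L(\xi_1)/\rho(\xi)|\le C_{\alpha,M}$ to close the argument.

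For the numerator I aim to prove $|\xi_1|^k|L^{(k)}(\xi_1)|\le C_{k,\alpha,M}|L(\xi_1)|$ for every $k\ge 0$. The integrals for $L^{(k)}$ obtained by differentiating (\ref{L}) under the integral converge absolutely thanks to the cancellation $\cos(r\xi_1)-1=O(r^2\xi_1^2)$ near $r=0$ and the exponential decay of $\phi$ at infinity, both of which rely on $0<\alpha<2$. The inequality is then checked in two regimes. For $|\xi_1|$ small, Taylor-expanding the cosine yields $L(\xi_1)\asymp -c_0\xi_1^2$, while $L^{(k)}(\xi_1)$ remains bounded as $\xi_1\to 0$ for $k\ge 2$, so that $|\xi_1|^k|L^{(k)}(\xi_1)|=O(|\xi_1|^k)=O(|L(\xi_1)|)$. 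For $|\xi_1|$ large, the substitution $r\mapsto r/|\xi_1|$ reduces matters to the classical $\alpha$-stable identity $\int_0^\infty(1-\cos s)s^{-1-\alpha}ds=c_\alpha$, yielding $|L^{(k)}(\xi_1)|\asymp \phi(0)c_{k,\alpha}|\xi_1|^{\alpha-k}$ and hence the desired scaling.

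For $1/\rho(\xi)$ I would imitate the Fa\`a di Bruno argument of the mixed-stable corollary: writing $1/\rho(\xi)=\tilde g(|\xi|^2)$ with $\tilde g(t)=1/((t+M^{2/\alpha})^{\alpha/2}-M)$, the mixed partial $\partial_{i_1}\cdots\partial_{i_k}(1/\rho(\xi))$ becomes a finite sum of products of $\tilde g^{(j)}(|\xi|^2)$ with polynomials in the $\xi_i$. The scalar estimate $|t^j\tilde g^{(j)}(t)|\le C_j\tilde g(t)$ on $(0,\infty)$ follows by direct computation from the two-scale behavior of $\tilde g$ ($\asymp t^{-1}$ near $0$ and $\asymp t^{-\alpha/2}$ at infinity), each of which individually satisfies the analogous scaling. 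Substituting back into the Fa\`a di Bruno expansion gives $|\xi_{i_1}\cdots\xi_{i_k}||\partial_{i_1}\cdots\partial_{i_k}(1/\rho(\xi))|\le C_k/\rho(\xi)$, and combining this with the estimate on $L$ via Leibniz delivers (\ref{homog}).

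The main technical obstacle is matching the two regimes for $L$ uniformly across the crossover $|\xi_1|\sim M^{1/\alpha}$, where neither the near-zero Taylor expansion nor the $\alpha$-stable asymptotic at infinity applies directly; this piece has no analogue in the mixed-stable argument, whose denominator was a pure sum of powers so that a single direct calculation sufficed. The condition $0<\alpha<2$ is essential throughout: it underlies the absolute convergence of the integrals defining $L^{(k)}$, it produces the $\alpha$-stable-like asymptotic of $L$ at infinity, and it gives the correct power-law structure of $\tilde g$ used in the Fa\`a di Bruno step. This is why, in contrast with the mixed-stable corollary, the upper bound $\alpha<2$ cannot be removed.
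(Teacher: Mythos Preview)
Your overall strategy---Leibniz on $L(\xi_1)\cdot\frac{1}{\rho(\xi)}$ together with Fa\`a di Bruno for $1/\rho$---is exactly the paper's. The difference is that you set out to control $|\xi_1|^k|L^{(k)}(\xi_1)|$ for \emph{every} $k\ge 0$, whereas the paper only treats $k=0,1$. This suffices because the Marcinkiewicz condition in Theorem~\ref{Marcinkiewicz} involves only partials $\partial_{i_1}\cdots\partial_{i_k}$ with \emph{distinct} indices $\{i_1,\dots,i_k\}\subset\{1,\dots,n\}$; hence in the Leibniz expansion at most one $\partial_1$ can land on $L(\xi_1)$, and only $L$ and $L'$ ever appear. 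The paper accordingly proves just
\[
|L(x)|\le C_\alpha\min\{|x|^\alpha,|x|^2\},\qquad |L'(x)|\le C_\alpha\min\{|x|^{\alpha-1},|x|\},
\]
combines these with the explicit Fa\`a di Bruno expression for $\partial_{i_1}\cdots\partial_{i_k}(1/\rho)$ (whose terms are $O(|\xi|^{-\alpha-k})$ at infinity and $O(|\xi|^{-2})$ near the origin), and concludes that the product in (\ref{homog}) is continuous on $\mathbb{R}^n\setminus\{0\}$ and bounded both near $0$ and at $\infty$. Your ratio estimate $|t^j\tilde g^{(j)}(t)|\le C_j\tilde g(t)$ for the denominator is an equivalent repackaging of the same computation.

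Your more ambitious program for the numerator has a real hole for $k\ge 2$: after the substitution $s=r|\xi_1|$ the integrand in $L^{(k)}$ carries the factor $s^{k-1-\alpha}$, which for $k\ge 2$ can grow at infinity, so the limiting integral $\int_0^\infty(\text{trig})(s)\,s^{k-1-\alpha}\,ds$ is not absolutely convergent and the claim $|L^{(k)}(\xi_1)|\asymp\phi(0)c_{k,\alpha}|\xi_1|^{\alpha-k}$ does not follow by the $\alpha$-stable identity plus dominated convergence---the only decay comes from $\phi(s/|\xi_1|)$, which degenerates to $\phi(0)$ as $|\xi_1|\to\infty$. Even for $k=1$ the paper needs more than boundedness of $\phi$: it uses that $\phi$ is \emph{decreasing} to bound $\int_0^\infty\sin(s)\,s^{-\alpha}\phi(s/|x|)\,ds$ by an alternating-series argument. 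Once you restrict attention to $k\le 1$ and incorporate this monotonicity, your proof coincides with the paper's; your remarks on why $0<\alpha<2$ is essential are also in agreement.
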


\noindent This is of course a weaker version of results already proven in \cite{BanBog1} and \cite{BanBog2}, but nevertheless, it is interesting to observe that this result can also be obtained analytically. 

\begin{proof}
In \cite{Chen}, it is shown that the L\'evy measure corresponding to $X_t$ can be written in polar coordinates by

\begin{equation*}
d\nu(z) = r^{-1-\alpha}\phi(r) drd\sigma(\theta)
\end{equation*}
where $\phi(r)$ is a bounded positive function that that satisfies

\begin{equation}
\label{phiest}\phi(r) \leq  Ce^{-r}r^{(n+\alpha-1)/2}
\end{equation}
when $r\geq 1$.

Now, by Fa\'a di Bruno's formula, $\partial_{i_1}\ldots\partial_{i_k} \frac{1}{\rho(\xi)}$ is a finite linear combination of terms with the form
\begin{equation}
\label{oneoverrho} \frac{\xi_{i_1}\ldots\xi_{i_k}(|\xi|^2+M^{2/\alpha})^{\frac{\alpha}{2}j-k}}{((|\xi|^2+M^{2/\alpha})^{\frac{\alpha}{2}}-M)^{j+1}}, \quad 0\leq j\leq k.
\end{equation}
Therefore, we see that $\frac{1}{\rho(\xi)}$ is infinitely differentiable on $\mathbb{R}^n\setminus\{0\}$ and 
\begin{equation*}
|\partial_{i_1}\ldots\partial_{i_k}\frac{1}{\rho(\xi)}| \leq O\left(\frac{1}{|\xi|^{\alpha+k}}\right)\quad\text{as }|\xi|\rightarrow\infty.
\end{equation*}
Near $0$,  each term in (\ref{oneoverrho}) is bounded above by

\begin{align*}
&C_{M,n,\alpha} \frac{1}{(|\xi|^2+M^{2/\alpha})^{\frac{\alpha}{2}}-M)^{j+1}} \\
&\leq  C_{M,n,\alpha} \frac{1}{(|\xi|^2+M^{2/\alpha})^{\frac{\alpha}{2}}-M)}\leq O\left(\frac{1}{|\xi|^{2}}\right)\quad\text{as }|\xi|\rightarrow 0.
\end{align*}

It is easy to check using the dominated convergence theorem, the mean value theorem and the fact that $r^{k-\alpha}\phi(r)$ is integrable on $(0,\infty)$  for all $k\geq 1$, that $L$ is infinitely differentiable on $(0,\infty)$. 
Therefore, in order to show that $\frac{L(\xi_1)}{\rho(\xi)}$ is a Marcinkiewicz multiplier it suffices to show that
\begin{align}
\label{Lest1}|L(\xi)|\leq C_\alpha\min\{|\xi|^\alpha,|\xi|^2\} 
\end{align}
and
\begin{align}
\label{Lest2}|L'(\xi)|\leq C_\alpha\min\{ |\xi|^{\alpha-1},|\xi|\}.
\end{align}
For then it will follow that $\frac{L(\xi_1)}{\rho(\xi)}$ satisfies $(\ref{homog})$ since
\begin{equation*}
\left|\xi_{i_1}\ldots\xi_{i_k}\partial_{i_1}\ldots\partial_{i_k} \frac{L(\xi_1)}{\rho(\xi)}\right|
\end{equation*} 
 is a continuous function on $\mathbb{R}^n\setminus\{0\}$ which is bounded near the origin and as $|\xi|\rightarrow\infty$. 

Making a change of variables, we see that  
\begin{align*}
|L(x)| &= \left|\int_0^\infty (\cos(rx)-1) r^{-1-\alpha}\phi(r)dr\right|\\
&= |x|^\alpha \left|\int_0^\infty (\cos(s)-1) s^{-1-\alpha}\phi\left(\frac{s}{|x|}\right) ds\right| \leq C_\alpha|x|^\alpha,
\end{align*}
where the last inequality uses the boundedness of $\phi$. On the other hand we can use the inequality $|\cos(x)-1|\leq x^2$, along with (\ref{phiest}) and the boundedness of $\phi$ to see that 
\begin{align*}
|L(x)| &= \left|\int_0^\infty (\cos(rx)-1) r^{-1-\alpha})\phi(r)dr\right|\\
 &\leq |x|^2 \left|\int_0^\infty r^{1-\alpha}\phi(r) dr\right| \leq C_\alpha|x|^2.
\end{align*}
This proves (\ref{Lest1}).  Note that  the fact that $0<\alpha<2$ is needed in order for this integral to converge. 
 
 To prove (\ref{Lest2}) observe that 
 \begin{equation*}
 L'(x) = \int_0^\infty \sin(rx) r^{-\alpha} \phi(r) dr.
 \end{equation*}
Using the fact that $|\sin(x)|\leq |x|,$ it follows that $|L'(x)|\leq C_\alpha|x|$ by mimicing the above arguments. To obtain the other part of (\ref{Lest2}) we a change of variables, and use the fact that $\varphi$ is decreasing to see
\begin{align*}
|L'(x)| &= |x|^{\alpha-1} \left|\int_0^\infty \sin(t)t^{-\alpha}\varphi\left(\frac{t}{x}\right) dt\right|\\
&= |x|^{\alpha-1} \sum_{n=0}^\infty (-1)^n \int_{n\pi}^{(n+1)\pi} \left|\sin(t)t^{-\alpha}\varphi\left(\frac{t}{x}\right)\right| dt\\
&\leq |x|^{\alpha-1} \left|\int_0^\pi \sin(t)t^{-\alpha}\varphi\left(\frac{t}{x}\right) dt\right|\\
&\leq C_\alpha |x|^{\alpha-1}.
\end{align*}
This completes the proof of corollary (\ref{relat}).
\end{proof}
 
\noindent\textbf{Acknowledgments.} I would like to thank my Ph.D. advisor, Rodrigo Ba\~nuelos, for being an invaluable source of advice throughout the process of writing this paper.

\end{document}